\renewcommand{\div}{\operatorname{div}}
\newcommand{\Rr}{{\mathbb{R}}}
\newcommand{\Aa}{{\mathcal{A}}}
\newcommand{\Ff}{{\mathcal{F}}}
\newcommand{\bx}{{\bf x}}
\newcommand{\bdx}{\dot{\bf x}}
\newcommand{\bp}{{\varpi}}
\newcommand{\bdp}{\dot{\varpi}}
\newcommand{\bQ}{{\bf Q}}
\newcommand{\bdQ}{\dot{\bf Q}}
\newcommand{\Pp}{{\mathcal P}}
\newcommand{\lb}{\left(}
\newcommand{\rb}{\right)}
\definecolor{darkgreen}{rgb}{0.,.66,0.}
\newcommand{\js}[1]{\ifthenelse{\boolean{include-notes}}%
 {\textcolor{cyan}{\textbf{[ #1  --JS]}}}{}}
\newcommand{\dg}[1]{\ifthenelse{\boolean{include-notes}}%
 {\textcolor{darkgreen}{\textbf{[ #1  --DG]}}}{}}
\newcommand{\add}[1]{\ifthenelse{\boolean{include-notes}}%
 {\textcolor{blue}{#1}}{#1}}
\newtheorem{teo}{Theorem}
\newtheorem{pro}{Proposition}
\newtheorem{cor}{Corollary}
\newtheorem{hyp}{Assumption}
\newtheorem{rem}{Remark}
\newtheorem{prob}{Problem}
\theoremstyle{definition}
\begin{document}

\title[]{A mean-field game approach to price formation in electricity markets}

 \author{Diogo A. Gomes}
 \address[D. A. Gomes]{
         King Abdullah University of Science and Technology (KAUST), CEMSE Division, Thuwal 23955-6900. Saudi Arabia.}
 \email{diogo.gomes@kaust.edu.sa}
% \author{Levon Nurbekyan}
% \address[L. Nurbekyan]{
% 	King Abdullah University of Science and Technology (KAUST), CEMSE Division, Thuwal 23955-6900. Saudi Arabia.}
% \email{levon.nurbekyan@kaust.edu.sa}
 \author{Jo\~ao Sa\'ude}
 \address[J. Saude]{
         Carnegie Mellon University, Electrical and Computer Engineering department. 5000 Forbes Avenue Pittsburgh, PA 15213-3890 USA.}
 \email{jsaude@andrew.cmu.edu}

\keywords{Mean-field games; Price formation; Monotonicity methods}
\subjclass[2010]{91A13, 91A10, 49M30} 

\thanks{
        D. Gomes  was  partially supported by KAUST baseline funds and KAUST OSR-CRG2017-3452.
        J. Sa\'ude was partially supported by FCT/Portugal through the CMU-Portugal Program.
}
\date{\today}
\begin{abstract}
	Here, we introduce a price-formation model where a large number of small players can store and trade electricity. 
	Our model is a constrained mean-field game (MFG) where the price is a Lagrange multiplier for the supply vs. demand balance condition.  
	We establish the existence of a unique solution using a fixed-point argument. In particular, 
	we show that the price is well-defined and it is a Lipschitz function of time. Then, we study linear-quadratic models that can be
	solved explicitly and compare our model with real data.  
\end{abstract}
\maketitle
%
%\tableofcontents
%
\section{Introduction} % (fold)
\label{sec:introduction}

%{\sc Paragraph 1}
%{\bf Establish the importance of the field, provide background %facts/information, define terminology in the title if necessary.}

	The mean-field game (MFG) framework \cite{Caines2, Caines1, ll1, ll2} models systems with many rational players (see, e.g.,  the surveys  \cite{GPV} and \cite{GS}). 
	Here, we are interested in the price formation in electricity markets. 
	In our model, a large number of agents owns storage devices that can be charged and later supply the grid with electricity. 
	Agents seek to maximize profit by trading electricity at a price $\bp(t)$, which is set by a supply versus demand balance condition.
	
	With the advent of electric cars, a large number of network-connected batteries are already available,
	and their number is only likely to increase. 
	Moreover, energy can be stored as heat or cold, using space or water heaters and air-conditioning units \cite{Kizilkale2014829, Kizilkale20141867,Kizilkale20143493}. 
	With new small network-capable devices, appliances can be connected to the grid and use smart algorithms to control their energy usage. 
	These algorithms can  balance supply and demand and, thus, are particularly relevant when combined with solar and wind energy production, where power demand seldom matches production.

%	In our model, agents seek to maximize their profit by trading electricity at the spot price $\bp(t)$, which is determined by a supply vs. demand balance condition.
	Price formation models were some of the first MFG models  \cite{ll3}. This line of research was pursued by several authors, see \cite{MR2835888, MR2817378, MR2573148, MR3210751, MR3078202, LachapelleLasryLehalleLions,  Gomes20164693} and the monograph \cite{GNP}.
	Some of these models are formulated as free boundary problems
	\cite{MR2835888, MR2817378}; others as a load control problem \cite{MR1029068, MR933047}. For example, 
	using mean-field control and MFG, the load-control problem through
	switching on and off
	space heaters 
	was studied 
	 in \cite{Kizilkale2014829, Kizilkale20141867,Kizilkale20143493}. 
	 Previous authors addressed the price issue by assuming that the demand is a given function of the price \cite{llg3}
	 or that the price is a given function of the demand, see
	 \cite{2011arXiv1110.1732C}, \cite{2017arXiv171008991C}, and \cite{de2016distributed}. In particular, in these references, the authors use a 
	 price function to study mean-field equilibrium in electricity markets in a setting that is
	 similar to ours.  
	
	 Here, we pursue a different approach: 
 	often, in economic models, prices of goods and services are determined by the balance between supply and demand rather than by a given function of the supply. Therefore, the price as a function of the supply or demand is not known a priori and a key unknown in the problem.  
	This observation motivated the approach in \cite{Gomes20164693}, where price arises from supply versus demand constraints. 
	However, that model is more complex than the one discussed here and was only studied from a numerical perspective. 
	Thus, mathematical issues such as
	the existence and uniqueness of a price, the well-posedness of the model, and the convergence of numerical methods
	were left unanswered and are settled here. 
		
	Our model comprises three quantities of interest: a price $\varpi\in C([0,T])$, a value function
	$u\in C(\Rr\times [0,T])$,  and a path describing the statistical distribution of the agents, $m\in C([0,T], \Pp)$, where $\Pp$ is the set of probability
	measures in $\Rr$ with bounded first moment, endowed with the $1$-Wasserstein distance. 
	These quantities are determined by the following problem.
	\begin{prob}
	\label{P1}
	%% H, \bar u needs 3 derivatives, \bar m at least two, and Q at least one....
	Given $\epsilon\geq 0$, 
	a Hamiltonian, $H:\Rr\times\Rr\to \Rr$, $H\in C^\infty$, 
	an energy production rate $\bQ:[0,T]\to \Rr$, $\bQ\in C^\infty([0,T])$,
	a terminal cost ${\bar u}:\Rr\to \Rr$, ${\bar u}\in C^\infty(\Rr)$ and an initial probability distribution ${\bar m}\in \Pp\cap C^\infty_c(\Rr)$, 
	find $u:\Rr\times [0,T]\to \Rr$, $m\in C([0,T], \Pp)$,  and $\bp:[0,T]\to \Rr$ solving 
	\begin{equation}
	\label{mainsys}
	\begin{cases}
	-u_t + H(x,\bp(t)+u_x)=\epsilon  u_{xx}\\
	m_t - (D_p H(x,\bp(t)+u_x)m)_x=\epsilon m_{xx}\\
	\int_{\Omega} D_p H(x,\bp(t)+u_x) d m = -\bQ(t),
	\end{cases}
	\end{equation}	
	and satisfying the initial-terminal conditions
	\begin{equation} \label{itcond}
\begin{cases}
u(x,T)={\bar u}(x),\\
m(x,0)={\bar m}(x). 
\end{cases}
\end{equation}
\end{prob}
	
     In the previous problem, $x\in \Rr$ represents the 
	state of a typical agent; that is, the energy stored by the agent. 
	The function $u(x,t)$ is the value function for an agent whose charge is $x$ at time $t$. 
	The Hamiltonian, $H:\Rr\times\Rr\to \Rr$ is determined by the optimization problem that each agent seeks to solve, as described in Section \ref{sec:a_mean_field_model_for_price_formation}. 
	We require $u$ to be a viscosity of the first equation in \eqref{mainsys}. However, if $\epsilon>0$, parabolic regularity theory gives 
	additional regularity for $u$. 
	For each  $t\in [0,T]$, $m$ determines the distribution of the energy storage of the agents. Here, we assume that $m$ is a weak solution of the second equation in \eqref{mainsys}; that is, for every 
	$\psi\in C^2_c(\Rr\times [0, T])$, we have	
	\begin{align*}
	&\int_0^T\int_\Rr \left(\psi_t+\psi_x D_pH(x, \varpi +u_x)-\epsilon \psi_{xx}\right)m dx dt\\&\quad =
	\int_{\Rr} \psi(x,T) m(x,T)dx-\int_{\Rr} \psi(x,0) {\bar m}(x)dx. 
	\end{align*}
	The parameter $\epsilon$ corresponds to random fluctuation in the storage of the agents.   
	Finally, the spot price, $\bp(t)$, is selected so that the total energy used balances the supply, $\bQ(t)$, the condition imposed by the last equation in \eqref{mainsys}.

%	We endow \eqref{mainsys} with initial-terminal data
  In the current model, agents have a time horizon $T>0$, and, at time $T$, 
  they  incur in the terminal cost ${\bar u}(x)$ that depends on their state at the terminal time. For example, agents may prefer to have the batteries fully charged at the end of the day. 
  Moreover, the initial distribution of agents, ${\bar m}$, is known. These two facts are encoded in
   the initial-terminal boundary conditions, \eqref{itcond}. This model can easily be modified to address
   periodic in time boundary conditions and the infinite horizon discounted problem. 

	First, 
	in Section \ref{sec:a_mean_field_model_for_price_formation}, we present a derivation of our model
	and examine some of its mathematical properties. 
	Then, after a brief discussion of the main 
	assumptions, in Section \ref{AS}, we prove our main result
    given by the following theorem. 
	\begin{teo}
	\label{T1}
	Suppose that Assumptions \ref{A1}--\ref{A5} (see Section \ref{AS}) hold. Then, there exists a solution 
	$(u, m, \varpi)$ of Problem \ref{P1} where $u$ is a  viscosity solution of the first equation, Lipschitz and semiconcave in $x$, and differentiable almost everywhere with respect to $m$, $m\in C([0,T], \Pp)$, and $\varpi$ is Lipschitz continuous. 
	Moreover, if $\epsilon>0$ this solution is unique.  
	
	If $\epsilon=0$ and Assumption \ref{A6} holds, then there is a unique  solution $(u, m, \varpi)$. Moreover, $u$ is differentiable in $x$ for every $x$,  and  $u_{xx}$ and $m$ are bounded.  
	\end{teo}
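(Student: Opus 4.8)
The plan is to decouple the system through the price and reduce everything to a fixed-point problem for $\varpi$ alone. Fix $\epsilon>0$ and a candidate price $\varpi\in C([0,T])$. The first equation in \eqref{mainsys} is then a uniformly parabolic Hamilton--Jacobi equation with smooth terminal data $\bar u$, so standard parabolic theory produces a unique smooth solution $u^\varpi$ that is Lipschitz and semiconcave in $x$, with bounds controlled by the data through Assumptions \ref{A1}--\ref{A5}. Feeding the drift $D_pH(x,\varpi(t)+u^\varpi_x)$ into the second equation yields a linear Fokker--Planck equation whose unique weak solution $m^\varpi\in C([0,T],\Pp)$ is a smooth probability density (mass is conserved and positivity preserved). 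Only the balance constraint remains, and I would use it to close the loop.

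Concretely, for each $t$ the map $q\mapsto\int_\Rr D_pH(x,q+u^\varpi_x)\,dm^\varpi$ is continuous, strictly increasing (by the convexity of $H$ in $p$, i.e. $D_{pp}H>0$), and surjective onto $\Rr$ (by coercivity of $D_pH$); hence there is a unique value $\Psi(\varpi)(t)$ at which it equals $-\bQ(t)$. A fixed point $\varpi=\Psi(\varpi)$ is exactly a solution of the third equation, so it remains to verify the hypotheses of Schauder's theorem. This needs two a priori estimates, uniform over $\varpi$ in the relevant set: a bound on $\|\Psi(\varpi)\|_\infty$, from coercivity together with the Lipschitz bound on $u^\varpi_x$ and tightness of $m^\varpi$; and a Lipschitz-in-time bound, obtained by differentiating $\int D_pH(x,\Psi(\varpi)+u^\varpi_x)\,dm^\varpi=-\bQ$ in $t$ and solving for $\dot\Psi(\varpi)$, using that $\int D_{pp}H\,dm^\varpi$ is bounded below. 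Restricting $\Psi$ to the convex, $C([0,T])$-compact set cut out by these two bounds (compactness via Arzel\`a--Ascoli) and checking continuity of $\Psi$ produces a fixed point, hence existence for $\epsilon>0$.

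Uniqueness follows from a monotonicity computation in the spirit of Lasry--Lions. Given two solutions, set $\tilde u=u_1-u_2$, $\tilde m=m_1-m_2$, and $p_i=\varpi_i+\partial_x u_i$, and evaluate $\tfrac{d}{dt}\int_\Rr\tilde u\,\tilde m\,dx$ using the three equations. The viscosity terms cancel after integration by parts, and crucially the terms carrying $\varpi_1-\varpi_2$ collect into $(\varpi_1-\varpi_2)\int(D_pH_1\,m_1-D_pH_2\,m_2)\,dx$, which vanishes because the balance constraint forces both integrals to equal $-\bQ(t)$. What survives is
\begin{align*}
\frac{d}{dt}\int_\Rr\tilde u\,\tilde m\,dx
&=\int_\Rr m_1\bigl[H(x,p_1)-H(x,p_2)-(p_1-p_2)D_pH(x,p_1)\bigr]\,dx\\
&\quad+\int_\Rr m_2\bigl[H(x,p_2)-H(x,p_1)-(p_2-p_1)D_pH(x,p_2)\bigr]\,dx,
\end{align*}
and convexity of $H$ in $p$ makes each bracket nonpositive, so this quantity is nonincreasing. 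Since $\tilde m(\cdot,0)=0$ and $\tilde u(\cdot,T)=0$, it vanishes at both endpoints; being nonincreasing, it is identically zero, and strict convexity then forces $p_1=p_2$ on $\supp m_i$. Propagating this equality through the equations and the constraint gives $\varpi_1=\varpi_2$, $u_1=u_2$, $m_1=m_2$.

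For $\epsilon=0$ I would pass to the vanishing-viscosity limit of the solutions $(u^\epsilon,m^\epsilon,\varpi^\epsilon)$ built above. The Lipschitz and semiconcavity bounds on $u^\epsilon$, the price bounds, and uniform first-moment bounds on $m^\epsilon$ should all be independent of $\epsilon$, so along a subsequence $u^\epsilon\to u$ locally uniformly (a viscosity solution, Lipschitz and semiconcave), $\varpi^\epsilon\to\varpi$ uniformly (Lipschitz), and $m^\epsilon\to m$ in $C([0,T],\Pp)$. The role of Assumption \ref{A6} is to supply the additional structure that keeps $u_{xx}$ and $m$ bounded and renders $u$ everywhere differentiable, so that one may pass to the limit in $\int D_pH(x,\varpi+u_x)\,dm=-\bQ$ and rerun the monotonicity argument for uniqueness. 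I expect the main obstacle throughout to be the time-Lipschitz bound on $\varpi$: it is the single estimate that is not a routine consequence of parabolic regularity, and in the first-order case it must be reconciled with the fact that $u_x$ is only defined $m$-almost everywhere, so that the balance integral and its time derivative remain meaningful.
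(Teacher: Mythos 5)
Your strategy for $\epsilon>0$ and for uniqueness is essentially the paper's. The one substantive variation is how you update the price: you invert the balance relation pointwise in time to define $\Psi(\varpi)(t)$, whereas the paper integrates the time-differentiated relation as the ODE \eqref{node}, anchored at the value $\vartheta_0$ from \eqref{id}. Both are legitimate; your version makes it immediate that a fixed point satisfies the constraint, and the Lipschitz bound you need comes from exactly the computation leading to \eqref{pode} --- note that the cancellation of the $u_{xx}$ terms survives the mismatch of arguments ($\Psi+u_x$ in the constraint versus $\varpi+u_x$ in \eqref{hj} and \eqref{fp}) only because $D^2_{xp}H=0$ (Remark \ref{R1}), and the term $\epsilon\int D^3_{ppp}H\,u_{xx}^2m$ is controlled by Proposition \ref{P8}. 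Your uniqueness argument is the monotonicity computation of Proposition \ref{MP} in unpackaged Lasry--Lions form, with the same key observation that the $(\varpi_1-\varpi_2)$ terms die against the balance constraint.

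The genuine gap is in the first-order existence. Knowing that $m^\epsilon\to m$ in $C([0,T],\Pp)$ and $u^\epsilon_x\to u_x$ a.e.\ does not let you pass to the limit in the flux $D_pH(x,\varpi^\epsilon+u^\epsilon_x)\,m^\epsilon$ or in the balance integral: $m^\epsilon$ converges only weakly as a measure, $u_x$ is merely Lipschitz and semiconcave, and a priori the limit $m$ could charge the set where $u_x$ jumps, so the weak limit of $D_pH(\cdot)\,m^\epsilon$ need not equal $D_pH(x,\varpi+u_x)\,m$. This is precisely where the paper does its real work: it introduces the phase-space measures $\mu^\epsilon$, passes to a limit $\mu$, and uses the uniform convexity of $H$ together with a mollification of $u$ (Evans' adjoint/compensated-compactness method \cite{E3}) to show that $\mu$ concentrates on the graph $p=\varpi+u_x$; this simultaneously identifies the limiting flux and yields the claimed $m$-a.e.\ differentiability of $u$. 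Your proposal needs this (or an equivalent) argument, and your closing diagnosis points at the wrong obstacle: the time-Lipschitz bound on $\varpi$ is the routine part, while the identification of the limit of $D_pH\,m^\epsilon$ is the hard one. A smaller correction: Assumption \ref{A6} is not what makes the limiting balance integral meaningful --- that is handled by the phase-space argument under Assumptions \ref{A1}--\ref{A5} alone; \ref{A6} enters only to get convexity, hence $C^{1,1}$ regularity of $u$ and boundedness of $m$ (Proposition \ref{PZ1} and Corollary \ref{C1}), and to run the uniqueness argument when $\epsilon=0$.
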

	 
	\begin{rem}
	In the case $\epsilon>0$, the regularity of the solutions can be improved using parabolic regularity. 
	\end{rem} 
	 
	There are two main contributions of this paper. First, is the existence part of the preceding theorem  which is proved in 
	Section \ref{Es} using a fixed-point argument.
	The key step is establishing 
	an ordinary differential equation satisfied by the price, $\varpi$. Using this equation, we obtain Lipschitz bounds and then apply Schauder's fixed-point theorem. 
	To prove the uniqueness part of the theorem, 
	we use the monotonicity method. This is achieved in Section \ref{suniq} where we identify a new monotonicity structure for mean-field 
	games with constraints. Finally, we discuss
	linear-quadratic models, that can be solved explicitly and
	compare our model with the ones in \cite{de2016distributed}. Our results suggest that 
	a price determined by a supply versus demand condition may help stabilize the oscilations 
	of the price in particular in peak-demand situations.

\section{A mean-field model for price formation} % (fold)
\label{sec:a_mean_field_model_for_price_formation}
	%We discuss the problem of price formation in the electricity market.
	Here, we present the derivation of our price model. To simplify the discussion, we examine the deterministic case, $\epsilon=0$. 
	We consider an electricity grid connecting consumers to producers of energy.
	In our model, each consumer has a storage device connected to the network, for example, an electric car battery. 
	We assume that  all devices are similar. 
	Consumers trade electricity, charging the batteries when the price is low and selling electricity to the market when the price is high.
	A typical consumer has a battery whose charge at time $t\in [0,T]$ is $\bx(t)$. 
	This charge changes according to an energy flow rate,
	the control variable selected by each consumer, 
	which is a bounded measurable function of $\alpha:[0,T]\to A$, where $A \subset \Rr$. 
	Positive values of $\alpha$ correspond to buying energy from the grid, and negative values to selling to the grid.  
%	This rate is the control variable selected by the consumer.
	Accordingly, each consumer charge, $\bx$, changes according to the dynamics:
	\begin{equation*}
		\bdx(t) = \alpha(t). 
	\end{equation*}
	Each consumer seeks to select $\alpha$ to minimize its cost, thus maximizing profit. 
	This cost is determined by a terminal cost and by the integral of the \emph{running cost}, $\ell(\alpha,x,t)$, where $\alpha(t)$ is the energy traded with the electricity grid at time $t$,
	and $\ell$ depends in time through $\bp(t)$, the spot electricity price and is of the form
	\begin{equation} \label{lform}
		\ell(\alpha,x,t)=\ell_0(\alpha, x)+ \bp(t) \alpha(t). 
	\end{equation}
	In the preceding expression, the term $\bp(t) \alpha(t)$ is the instantaneous cost corresponding to a charging current $\alpha(t)$. 
	The current (or more precisely power), $\alpha$ is measured in Watt,  $\text{W}$, and the price, $\varpi$, in $\$ \text{W}^{-1}s^{-1}$.  
	The function $\ell_0$ accounts for non-linear effects of the current usage, for example, battery wear and tear, and for state preferences. 
	For example, we often take 
	\begin{equation} \label{quadl}
		\ell_0(\alpha,x,t) = \frac{c}{2} \alpha^2(t) +  V(x),
	\end{equation}   
    where $c$ is a constant that accounts for the battery's wear off, typically given in $\$ \text{W}^{-2} s^{-1}$, and $V(x)$ is a potential that takes into account battery constraints and charge preferences. 
	The singular case where
	\[
		V(x)=\begin{cases}
				0 \qquad \text{if}\ 0\leq x\leq 1\\
				+\infty\qquad \text{otherwise},
			 \end{cases}
	\]  
	corresponds to the case where the battery charges satisfies $0\leq x\leq 1$. 
	To avoid singularities, we work with smooth potentials growing as $x\to \pm \infty$; this behaviour correspond to a penalty on the battery charge rather than a hard constraint. 
	The nonlinear term, $\frac{c}{2}\alpha^2(t)$, models battery wear and tear, which is large in high-current regimes.  
	The particular quadratic form in \eqref{quadl} simplifies the mathematical treatment. 
	However, it can be replaced by a convex function of $\alpha$ without any major change in the discussion. 
	
	Each consumer minimizes the functional 
	\begin{equation}\label{J}
		J(x,t,\alpha)=\int_t^T \ell(\alpha(s), \bx(t),t) ds + {\bar u}(\bx(T)),
	\end{equation}
	where ${\bar u}$ is the \emph{terminal cost} and $\alpha\in \Aa_t$, where $\Aa_t$ is the set of bounded measurable functions $\alpha:[t,T]\to A$.
	
	The \emph{value function}, $u$, is the infimum of $J$ over all controls in $\Aa_t$; that is,
	\begin{equation*}
		u(x,t) = \inf_{\alpha \in \Aa_t} J(x,t,\alpha).
	\end{equation*}
    The \emph{Hamiltonian}, $H$, for the preceding control problem is
	\begin{equation*}
		H(x,p) = \sup_{a\in A} \lb - p a -\ell_0(x, a) \rb.%= \inf_{a\in A} \{ q \cdot a + \ell(x,a)\},
	\end{equation*}
	For example, for $\ell_0$ as in \eqref{quadl}, we have
	\[
		H(x, p)=\frac{p^2}{2c}+V(x).
	\]

	From standard optimal control theory, $u$ is a viscosity solution (see \cite{Bardi}) of the \emph{Hamilton-Jacobi equation}
	\begin{equation}
	\label{hjd}
	\begin{cases}
		-u_t + H(x,\bp(t)+ u_x)=0  \\
		u(x,T)={\bar u}(x).
	\end{cases}	
	\end{equation}
	For $\ell_0$ as in \eqref{quadl}, the prior equation becomes
	\begin{equation*}
	-u_t+\frac{1}{2c} (u_x+\bp(t))^2 - V(x)=0.
	\end{equation*}	
	
	Finally, at points of differentiability of $u$, the optimal control is given by
	\begin{equation*}
		\alpha^*(t) = -D_pH(x, \bp(t)+u_x(\bx(t),t)).
	\end{equation*}

	The associated \emph{transport equation} is the adjoint of the linearized Hamilton-Jacobi equation:
	\begin{equation}
	\label{te}
		\begin{cases}
		m_t - \lb D_p H(x, u_x+\bp(t))m \rb_x=0, 
		\\
		m(x,0)={\bar m}(x),
		\end{cases}
	\end{equation}
	where ${\bar m}$ is the initial distribution of the agents. 
	
	Taking $\ell_0$ as in \eqref{quadl}, the transport equation above becomes
	\begin{equation*}
		m_t - \frac{1}{c}(m(\varpi +u_x))_x =0.
	\end{equation*}
	
	Finally, we fix an \emph{energy production function} $\bQ(t)$ and require that the production balances demand. 
	Mathematically, this constraint corresponds to the identity
	\[
		\int_{\Rr} \alpha^*(t) m(x,t) dx = \bQ(t);
	\]
	that is, 
	\begin{equation}
	\label{bc}
	\int_{\Rr} D_p H(x, u_x+\bp(t))  m(x,t) dx = -\bQ(t).
	\end{equation}
	This foregoing equality is the balance equation that forces the consumed energy to match the  production; 
	this constraint determines the price, $\bp(t)$.
	
	% where $\mathcal E$ is the aggregate storage capacity of the system. 
	% If there are $N$ batteries connected to the grid and the average capacity is $ \mathcal{\bar E}$, then the total energy is $\mathcal E = N  \mathcal{\bar E}$.
	
	% {\color{blue} We set $\mathcal E=1$ - this can always be done by normalizing $\bQ$ - Also I don't understand why storage multiplies total charge?? 
	% This is not so well explained.... in any case, better normalize everything.}
	%
	% \jsnote{ The left hand side without $\Ee$ is, if we take $x\in[0,1]$ (adimensional) and $\alpha$ taking values in $A\subset [-1,1]$ 
	% (hence dimensions of time say $h^{-1}$), then we have an quantity of dimensions $h^{-1}$ that needs to match the power units (or energy per hour) of the energy production $Q(t)$, say kW. 
	% So we multiply by the storage capacity $\Ee$ in Kwh. }
	
	Combining \eqref{hjd}, \eqref{te} and \eqref{bc},  we obtain \eqref{mainsys} with $\epsilon=0$
	and the initial-terminal conditions \eqref{itcond}. 
%	\begin{equation}
%	\label{mfg}
%		\begin{cases}
%			-u_t + H(x,\bp(t)+u_x)=0\\
%			m_t - (D_p H(x,\bp(t)+u_x)m)_x=0\\
%			\int_{\Rr} D_p H(x,\bp(t)+u_x) m dx = -\bQ(t),
%		\end{cases}
%	\end{equation}
%	where 
	%(or the aggregate amount of traded electricity matches the flexible demand at each time instant, $Q(t)=D(t)-D_i(t)$.)
%From the variational problem \eqref{J},  we obtain terminal condition $u(x,T)={\bar u}(x)$. T
%	The system \eqref{mfg} is endowed with initial-terminal conditions, that is
%	\begin{equation}
%		\begin{cases}
%			u(x,T)={\bar u}(x)\\
%			m(x,0)={\bar m}(x),
%		\end{cases}
%	\end{equation}
%	where ${\bar u}$ is the terminal cost in \eqref{J} and ${\bar m}(x)$ is the initial distribution of the agents.
		
	Now, we consider the case where the agents are subject to independent random consumption.  In this case $\epsilon>0$.
	Let $(\Omega, \Ff, P)$ be a probability space, where $\Omega$ is a \emph{sample space}, $\Ff$ a $\sigma$-\emph{algebra} on $\Omega$ and $P$ a \emph{probability measure}.
	Let $W_t$ be a Brownian motion on $\Omega$ and $\{\Ff_t\}_{t\geq 0}$ the associated \emph{filtration}. 
	In this case, we model the agent's motion by the stochastic differential equation 
	\begin{equation*}
		d\bx(t) = \alpha(t)dt+\sqrt{2\epsilon}d W_t,  
	\end{equation*}
	where the control, $\alpha$, is a bounded  progressively measurable real-valued process. 
	Following the previous steps and using standard arguments in stochastic optimal control, we arrive again at \eqref{mainsys}.
% section a_mean_field_model_for_price_formation (end)	

\section{Main Assumptions}
\label{AS}

We begin by discussing our main assumptions.  First, 
we suppose that $H$ is the Legendre
transform of a Lagrangian that is the sum of an 
``energy flow cost", $\ell_0(\alpha)$, 
and a ``charge preference cost", $V(x)$, as follows:
\begin{hyp}
	\label{A1}
	The Hamiltonian $H$ is the Legendre transform of a convex Lagrangian:
	\begin{equation}
	\label{ham}
	H(x, p)=\sup_{\alpha\in \Rr}  -p\alpha -\ell_0(\alpha)-V(x) , 
	\end{equation}
	where $\ell_0\in C^2(\Rr)$ is a uniformly convex function  and $V\in C^2(\Rr)$ is bounded from below. 
\end{hyp}

\begin{rem}
	\label{R1}
	The preceding hypothesis implies that the map 
	$p\mapsto H(x,p)$ is (strictly) convex.  Moreover,
	the Hamiltonian in \eqref{ham} can be written as 
	\begin{equation}
	\label{sf}
	H(x,p)=H_0(p)-V(x).
	\end{equation}
	Thus, 
	\[
	D^2_{xp}H(x,p)=0
	\]
	for all $x, p\in \Rr$. 
\end{rem}

To obtain a fixed point, we need several 
a priori estimates. These depend on convexity and regularity properties of the data. The following two 
assumptions lay out our requirements on the potential, $V$. 

\begin{hyp}
	\label{A2}
	The potential $V$ in \eqref{ham} and the terminal data ${\bar u}$ are globally Lipschitz.
\end{hyp}
\begin{hyp}
	\label{A3}
	The potential $V$ in \eqref{ham} and  the terminal data ${\bar u}$ 
	satisfy
	\[
	|D^2_{xx}V| \leq C, \qquad |D^2_{xx}{\bar u}|\leq C
	\]
	for some positive constant $C$. 
\end{hyp}

Next, we state an additional regularity for the initial-terminal data that is used to 
prove second-order estimates. 
\begin{hyp}
\label{A4}
There exists a constant, $C>0$, such that 
\[
|{\bar m}_{xx}|, |{\bar u}_{xx}|\leq C. 
\]
\end{hyp}

The next two assumptions are used to 
ensure the solvability of the demand-supply relation; that is, given $\bQ$ that we can determine a suitable price.

\begin{hyp}
	\label{A5}
	There exists $\theta>0$ such that 
	\[
	D^2_{pp} H(x,p) >\theta
	\]
	for all $x, p\in \Rr$. In addition, 
	there exists $C>0$ such that
	\[
	|D^{3}_{ppp}H|\leq C. 
	\]
\end{hyp}

\begin{rem}
	\label{R2}	
	Using \eqref{sf} in Remark \ref{R1}, the preceding assumption combined with Assumption \ref{A1}
	implies that the function $p\mapsto D_pH(p,x)$ is strictly increasing and 
	\[
	\lim_{p\to -\infty} D_pH(p,x)=-\infty\qquad \lim_{p\to +\infty} D_pH(p,x)=+\infty,
	\]
	uniformly in $x$. 
\end{rem}
\begin{rem}
	\label{R3}
	The uniform convexity of $\ell_0$ in Assumption \ref{A1} gives an upper bound for $D^2_{pp} H$. Thus, 
	Assumption \ref{A1}  and \ref{A5} imply
	\[
	|D^2_{pp}H(x,p)|\leq C
	\]
	for all $x, p\in \Rr$.
\end{rem}

The following hypothesis gives regularity and uniqueness of solutions in the first-order case.

\begin{hyp}
\label{A6}
The potential, $V$, and the terminal cost, ${\bar u}$, are convex. 
\end{hyp}

\section{Existence of a solution}
\label{Es}

Here, we establish the existence of a solution for the price model, 
\eqref{mainsys}, using a fixed-point argument on $\varpi$. 
In the following two propositions, we examine the Hamilton-Jacobi equation
\begin{equation}
\label{hj}
\begin{cases}
-u_t+H(x,\varpi+u_x)=\epsilon u_{xx}\\
u(x,T)={\bar u}(x).
\end{cases}
\end{equation}
First, using Assumption \ref{A2}, we prove the Lipschitz continuity of $u$. Next, using Assumption \ref{A3}, 
we obtain the semiconcavity of $u$. The proofs follow standard arguments in optimal control theory. However, we present them here to make it evident that the Lipschitz and semiconcavity constants are
uniform in $\varpi$ and $\epsilon$, both essential points in our argument. 

\begin{pro}
\label{P4}
Consider the setting of Problem \ref{P1} and suppose that Assumptions
\ref{A1} and \ref{A2} hold. Let 
$u$ solve \eqref{hj}. Then,
$u(x,t)$ is locally bounded and the map
  $x\mapsto u(x,t)$ is Lipschitz for $0\leq t\leq T$. Moreover, the Lipschitz bound on $u$ does not depend on $\varpi$
  nor on $\epsilon$. 
\end{pro}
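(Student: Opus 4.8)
The plan is to work directly with the optimal-control representation of the value function rather than with the PDE, since that representation makes the uniformity in $\bp$ and $\ep$ completely transparent. For $\ep=0$, $u$ is the value function
\[
u(x,t)=\inf_{\alpha\in\Aa_t}\lb\int_t^T\lsb \ell_0(\alpha(s))+V(\bx(s))+\bp(s)\alpha(s)\rsb ds+{\bar u}(\bx(T))\rb,
\]
where $\bdx(s)=\alpha(s)$ and $\bx(t)=x$; for $\ep>0$ the same formula holds with the expectation of the functional, the dynamics $d\bx(s)=\alpha(s)\,ds+\sqrt{2\ep}\,dW_s$, and $\alpha$ progressively measurable. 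The equivalence of this representation with the viscosity (resp.\ classical, when $\ep>0$) solution of \eqref{hj} is standard optimal-control theory.

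For the Lipschitz estimate, fix $t$ and two starting points $x,y$, and let $\alpha$ be a $\delta$-optimal control for $u(y,t)$. The decisive observation is that, using the \emph{same} control $\alpha$ (and, when $\ep>0$, the \emph{same} Brownian path) for the trajectory $\bx$ started at $x$ and the trajectory $\by$ started at $y$, the additive dynamics force $\bx(s)-\by(s)=x-y$ for every $s\in[t,T]$. Comparing $u(x,t)\le J(x,t,\alpha)$ with $u(y,t)\ge J(y,t,\alpha)-\delta$, the terms $\ell_0(\alpha(s))$ and $\bp(s)\alpha(s)$ cancel because the control is identical, and the stochastic integrals cancel because the noise is identical; only the state-dependent terms survive. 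Since $|\bx(s)-\by(s)|=|x-y|$, Assumption \ref{A2} yields
\[
u(x,t)-u(y,t)\le \lb (T-t)\,\mathrm{Lip}(V)+\mathrm{Lip}({\bar u})\rb|x-y|+\delta.
\]
Letting $\delta\to0$ and interchanging $x$ and $y$ gives the Lipschitz bound with constant $T\,\mathrm{Lip}(V)+\mathrm{Lip}({\bar u})$, which depends only on $T$ and the data, precisely because $\bp$ and $\ep$ dropped out in the cancellation.

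For local boundedness, an upper bound follows by inserting the admissible control $\alpha\equiv0$, which keeps $\bx\equiv x$ when $\ep=0$ and displaces it by a zero-mean Gaussian when $\ep>0$; the Lipschitz bound on $V$ and ${\bar u}$ from Assumption \ref{A2} then controls the expectation and gives $u(x,t)\le (T-t)\lb\ell_0(0)+V(x)\rb+{\bar u}(x)+C\sqrt{\ep}$. A lower bound follows from $\inf_{\alpha}\lb\ell_0(\alpha)+\bp(s)\alpha\rb=-H_0(\bp(s))$ together with the lower bound on $V$ from Assumption \ref{A1}, so that $u(x,t)\ge \int_t^T\lb-H_0(\bp(s))+\inf V\rb ds+\inf{\bar u}$; this is finite since $\bp\in C([0,T])$ and $H_0$ is continuous.

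The only delicate point is the rigorous justification of the control representation and of the ``same control, same noise'' coupling in the stochastic case; once this is in place the estimate is immediate. The crux is thus conceptual rather than computational: recognizing that the price enters the running cost only through the control-linear, state-independent term $\bp(s)\alpha(s)$, and that the noise is additive, so that both cancel in the difference of the two value functions and cannot enter the Lipschitz constant.
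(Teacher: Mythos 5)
Your proposal is correct and follows essentially the same route as the paper: both use the stochastic (or deterministic) control representation of $u$, obtain the upper bound from the sub-optimal control $\alpha\equiv 0$ and the lower bound from the boundedness of $V$ from below, and derive the Lipschitz estimate by reusing a (near-)optimal control for a shifted initial point so that the $\ell_0$ and $\varpi\alpha$ terms cancel and only the Lipschitz constants of $V$ and $\bar u$ survive. Your use of a $\delta$-optimal control rather than assuming an exact minimizer is a minor refinement of the same argument.
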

\begin{proof}
The proof follows from the representation of  $u$ as a solution to a stochastic control problem (or deterministic if $\epsilon=0$).  
We fix a filtered probability space $(\Omega, \Ff_t, P)$ that supports a one-dimensional Brownian motion $W_t$. Then,
\[
u(x,t)=\inf E\left[\int_t^T \ell_0(\alpha)+\varpi \alpha+V(\bx) ds +{\bar u}(\bx(T))\right],
\]
where the infimum is taken over bounded progressively measurable
controls $\alpha:[t, T]\to \Rr$ and $\bx$ solves the stochastic differential equation
\[
d\bx =\alpha dt +\sqrt{2\epsilon} dW_t. 
\] 
To prove local boundedness, we use the sub-optimal control $\alpha\equiv 0$ to get an upper bound, 
and the fact that $V$ is bounded by below to obtain the lower bound. We observe, however, 
that the lower bound depends on bounds on $\varpi$. 

Then, we fix an optimal control, $\alpha^*$, for $(x,t)$; that is, 
\[
u(x,t)= E\left[\int_t^T \ell_0(\alpha^*)+\varpi \alpha^*+V(\bx^*) ds +{\bar u}(\bx(T)^*)\right]. 
\]
Then, for any $h\in \Rr$, we have
\[
u(x+h, t)\leq E\left[\int_t^T \ell_0(\alpha^*)+\varpi \alpha^*+V(\bx^*+h) ds +{\bar u}(\bx(T)^*+h)\right],
\]
from which the Lipschitz bound follows. Note that this Lipschitz bound does not depend
on $\varpi$, only on $T$ and on the Lipschitz estimates for $V$ and ${\bar u}$. 
\end{proof}

\begin{pro}
\label{P2}
Consider the setting of Problem \ref{P1} and suppose that Assumptions
\ref{A1} and \ref{A3} hold.
	Then, $x\mapsto u(x,t)$ is semiconcave with a semiconcavity constant that does not 
	depend on $\epsilon$ nor on $\varpi$. 
\end{pro}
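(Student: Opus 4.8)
The plan is to mirror the stochastic control representation used in the proof of Proposition \ref{P4} and to exploit the fact that the dynamics $d\bx = \alpha\,dt + \sqrt{2\epsilon}\,dW_t$ has additive noise and a drift that is independent of the state. As there, we write
\[
u(x,t)=\inf E\left[\int_t^T \ell_0(\alpha)+\varpi \alpha+V(\bx)\, ds +{\bar u}(\bx(T))\right],
\]
with $\bx(t)=x$ and the infimum taken over bounded progressively measurable controls. Fix $(x,t)$ and let $\alpha^*$ be an optimal (or, if necessary, $\delta$-optimal) control for this point, with associated trajectory $\bx^*$. The key structural observation is that, using the \emph{same} control $\alpha^*$ from the shifted starting points $x\pm h$, the resulting trajectories are exactly $\bx^*\pm h$, since shifting the initial condition by a constant shifts the whole path by that constant; crucially, this holds for every $\epsilon\geq 0$ because the noise enters additively.

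Next, I would use $\alpha^*$ as a (sub-optimal) test control for the control problems at $x+h$ and $x-h$, obtaining the two upper bounds
\[
u(x\pm h,t)\leq E\left[\int_t^T \ell_0(\alpha^*)+\varpi \alpha^*+V(\bx^*\pm h)\, ds +{\bar u}(\bx^*(T)\pm h)\right].
\]
Adding these inequalities and subtracting $2u(x,t)$, the running cost terms $\ell_0(\alpha^*)$ and $\varpi \alpha^*$ cancel exactly, and so does the control-dependent part of the terminal contribution. What remains is
\[
u(x+h,t)+u(x-h,t)-2u(x,t)\leq E\left[\int_t^T \Delta_h V(\bx^*)\, ds +\Delta_h {\bar u}(\bx^*(T))\right],
\]
where $\Delta_h g(y):=g(y+h)+g(y-h)-2g(y)$ is the symmetric second difference. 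This cancellation is exactly what removes the dependence on $\varpi$ and on $\epsilon$.

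Finally, I would control each second difference by Assumption \ref{A3}: since $|D^2_{xx}V|\leq C$ and $|D^2_{xx}{\bar u}|\leq C$, a second-order Taylor estimate gives $\Delta_h V(y)\leq C h^2$ and $\Delta_h {\bar u}(y)\leq C h^2$ for all $y$ and $h$. Substituting and integrating in $s$ yields
\[
u(x+h,t)+u(x-h,t)-2u(x,t)\leq C\, h^2\,(T-t)+C\, h^2\leq C(1+T)\, h^2,
\]
which is the desired semiconcavity estimate, with a constant $C(1+T)$ that is manifestly independent of both $\varpi$ and $\epsilon$.

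I expect the only genuinely delicate point to be the justification of the trajectory translation together with the exact cancellation of the control-dependent cost: this additive structure is precisely what makes the constant uniform in $\varpi$ and $\epsilon$, and it is where the argument would fail for state-dependent noise or a drift depending on $x$. The existence of an exact optimal control is not essential; if one is unavailable, the same argument runs verbatim with $\delta$-optimal controls, letting $\delta\to 0$ at the end.
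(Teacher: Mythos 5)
Your argument is correct and is essentially the paper's own proof: the same stochastic-control representation, the same use of the optimal control $\alpha^*$ as a sub-optimal competitor from the shifted initial points $x\pm h$, and the same cancellation leaving only the second differences of $V$ and $\bar u$, which Assumption \ref{A3} bounds by $Ch^2$. You merely spell out the trajectory-translation step, the exact cancellation of the $\ell_0$ and $\varpi\alpha^*$ terms, and the $\delta$-optimal fallback, all of which the paper leaves implicit.
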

\begin{proof}
As before, we fix an optimal control $\alpha^*$
 for $(x,t)$; that is, 
\[
u(x,t)= E\left[\int_t^T \ell_0(\alpha^*)+\varpi \alpha^*+V(\bx^*) ds +{\bar u}(\bx(T)^*)\right]. 
\]
Then, for any $h\in \Rr$, we have
\[
u(x\pm h, t)\leq E\left[\int_t^T \ell_0(\alpha^*)+\varpi \alpha^*+V(\bx^*\pm h) ds +{\bar u}(\bx(T)^*\pm h)\right]. 
\]
Therefore,
\[
u(x+h,t)-2 u(x,t)+u(x-h,t)\leq C h^2. 
\]
Note that $C$ does not depend
on $\varpi$, only on $T$ and on the semiconcavity estimates for $V$ and ${\bar u}$. 
\end{proof}

%We observe that the previous proposition would still be valid if $D^2_{xx}V\leq C$. 
We have the following stability properties for the solutions 
of \eqref{hj}. 

\begin{pro}
\label{P6}
Consider the setting of Problem \ref{P1} 
and suppose  that Assumptions
\ref{A1}--\ref{A3} hold. Suppose that 
$\varpi_n\to \varpi$ uniformly on $[0,T]$, then $u^n\to u$ locally uniformly and $u^n_x\to u_x$ almost everywhere. 
\end{pro}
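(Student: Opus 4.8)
The plan is to read off everything from the optimal-control representation used in Proposition \ref{P4}, combined with the uniform Lipschitz bound of Proposition \ref{P4} and the uniform semiconcavity bound of Proposition \ref{P2}. Write $u^n$ for the solution of \eqref{hj} with price $\varpi_n$ and $u$ for the solution with price $\varpi$. The two control problems share the same dynamics $d\bx=\alpha\,dt+\sqrt{2\epsilon}\,dW_t$, the same running cost $\ell_0(\alpha)+V(\bx)$, and the same terminal cost $\bar u$, differing only in the linear term $\varpi_n\alpha$ versus $\varpi\alpha$. Hence, fixing a (near-)optimal control $\alpha^{*}$ for the $\varpi$-problem at $(x,t)$ and using it as an admissible control for the $\varpi_n$-problem, and vice versa, I obtain the comparison estimate
\[
|u^n(x,t)-u(x,t)|\le \|\varpi_n-\varpi\|_{\infty}\,\sup\,E\!\left[\int_t^T|\alpha^{*}|\,ds\right],
\]
where the supremum is taken over the optimal controls of both problems.

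The next step is to bound these controls uniformly in $n$. The soft route, which suffices for the stated local uniform convergence, uses coercivity: the uniform convexity of $\ell_0$ in Assumption \ref{A1} gives $\ell_0(\alpha)\ge \tfrac{\theta}{2}\alpha^2-c$, while $V$ is bounded below and $\bar u$ is Lipschitz. Since the suboptimal control $\alpha\equiv 0$ shows $u(x,t)$ has at most linear growth in $x$ uniformly in $\varpi$ and $\epsilon$, a Young-inequality absorption of the $\varpi_n\alpha$ and $\bar u$ terms yields $E\int_t^T\alpha^2\,ds\le C(1+|x|)$ uniformly in $n$ and $\epsilon$, hence $E\int_t^T|\alpha|\,ds\le C\sqrt{T}(1+|x|)^{1/2}$. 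Substituting into the comparison estimate gives $|u^n-u|\le C(1+|x|)^{1/2}\|\varpi_n-\varpi\|_{\infty}\to 0$ uniformly on compact sets, which is exactly the local uniform convergence claimed. (Alternatively, the global Lipschitz bound $|u^n_x|\le L$ of Proposition \ref{P4} together with the feedback formula $\alpha^{*}=-D_pH(\bx,\varpi_n+u^n_x)$ and the bound $|D^2_{pp}H|\le C$ of Remark \ref{R3} gives $|\alpha^{*}|\le C$ globally, upgrading the conclusion to full uniform convergence.)

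For the convergence of derivatives I would use semiconcavity. By Proposition \ref{P2}, for each fixed $t$ the maps $x\mapsto u^n(x,t)-\tfrac{C}{2}x^2$ are concave with $C$ independent of $n$, and by the first part they converge locally uniformly to $x\mapsto u(x,t)-\tfrac{C}{2}x^2$. For a locally uniformly convergent sequence $g_n\to g$ of concave functions, the derivatives converge at every point of differentiability of $g$: if $p_n$ is a supergradient of $g_n$ at $x_0$, then for every $h>0$ one has $\frac{g_n(x_0+h)-g_n(x_0)}{h}\le p_n\le \frac{g_n(x_0)-g_n(x_0-h)}{h}$, and letting first $n\to\infty$ and then $h\to 0^{+}$ squeezes $p_n\to g'(x_0)$ whenever $g$ is differentiable at $x_0$. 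As the limit is Lipschitz and semiconcave, it is differentiable for a.e. $x$, so for each fixed $t$ we get $u^n_x(\cdot,t)\to u_x(\cdot,t)$ for a.e. $x$. Taking the countable union over $n$ of the (per-$n$ null) nondifferentiability sets and intersecting with the full-measure differentiability set of the limit removes the set where the argument fails, and a final application of Fubini upgrades ``a.e. $x$ for every $t$'' to a.e. convergence on $\Rr\times[0,T]$.

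The main obstacle is the derivative step, not the essentially soft comparison step. Two points require care: first, if one takes the feedback route, justifying that the optimal control is given by $-D_pH(\bx,\varpi+u_x)$ along the optimal trajectory when $\epsilon=0$ relies on semiconcavity and on optimal trajectories avoiding the singular set for $t<T$; this is why I prefer the coercivity argument with near-optimal controls, which needs only an $L^2$, hence $L^1$, control bound. Second, the concave-convergence lemma must be applied only on the full-measure set where all $u^n$ and the limit are simultaneously differentiable, which is handled by the countable-union/Fubini bookkeeping indicated above.
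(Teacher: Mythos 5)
Your proof is correct, and the derivative step is exactly the paper's argument (uniform semiconcavity plus local uniform convergence forces $u^n_x\to u_x$ a.e., via the standard supergradient squeeze); the paper states this in one sentence and you have supplied the details, including the Fubini bookkeeping, correctly. Where you genuinely diverge is the first step: the paper obtains local uniform convergence of $u^n$ by invoking the abstract stability of viscosity solutions under uniform convergence of the Hamiltonians $H(x,\varpi_n(t)+p)\to H(x,\varpi(t)+p)$, together with the uniform Lipschitz bound of Proposition \ref{P4} for compactness. You instead exploit the special structure of the perturbation --- the two control problems differ only in the linear term $\varpi\alpha$ --- to get the quantitative estimate $|u^n(x,t)-u(x,t)|\le \|\varpi_n-\varpi\|_\infty\,\sup E\!\left[\int_t^T|\alpha^*|\,ds\right]$, and then close it with a coercivity bound $E\int\alpha^2\le C(1+|x|)$ that is uniform in $n$ and $\epsilon$ because $\ell_0$ is uniformly convex and $\|\varpi_n\|_\infty$ is bounded. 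Your route buys an explicit modulus of continuity for the map $\varpi\mapsto u$ (indeed Lipschitz dependence on $\|\varpi_n-\varpi\|_\infty$, locally in $x$), which is stronger than what the fixed-point argument strictly needs but could be reused elsewhere; the paper's route is shorter and requires no discussion of optimal or near-optimal controls. Your caution about the feedback formula $\alpha^*=-D_pH(\bx,\varpi+u_x)$ at $\epsilon=0$ is well placed, and your decision to avoid it in favor of near-optimal controls is the right call.
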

\begin{proof}
The local uniform convergence of $u^n$ follows from the stability of viscosity solutions. 

Because $u^n$ is semiconcave and converges uniformly to $u$, $u^n_x\to u_x$ almost everywhere. 
\end{proof}

Now, we examine the Fokker-Planck equation. 
%Here, we take $\delta>0$. For technical reasons it is convenient to consider a new parameter $\delta$. If $\epsilon>0$ then $\delta=\epsilon$. 
\begin{equation}
\label{fp}
\begin{cases}
	m_t-\div(m D_pH(x, \varpi+u_x))=\epsilon\Delta m,\\
	m(x,0) = \bar m(x).
\end{cases}
\end{equation}
Let $\Pp$ denote the set of probability measures on $\Rr$ with finite second-moment and endowed
with the  $1$-Wasserstein distance. 
\begin{pro}
\label{P7}	
Consider the setting of Problem \ref{P1}  with 
$\epsilon> 0$ and suppose that Assumptions
\ref{A1}--\ref{A2} hold.
Then, \eqref{fp} has a solution $m\in C([0,T], \Pp)$. Moreover, 
\begin{equation}
\label{d1c}
d_1(m(t), m(t+h))\leq C h^{1/2}. 
\end{equation}

In addition, if Assumptions \ref{A3} and \ref{A4} hold, for any sequence 
$\varpi_n\to \varpi$ uniformly on $[0,T]$ and corresponing solutions $u_n$ of 
\eqref{hj} and $m_n$ of \eqref{fp}, 
we have
$m_n\to m$ in $C([0,T], \Pp_1)$.
\end{pro}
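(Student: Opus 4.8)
The plan is to build $m$ as the law of a diffusion, read off the modulus \eqref{d1c} from a coupling, and then obtain the stability statement by a compactness–and–uniqueness argument, with the only genuinely delicate point being the passage to the limit in the drift.

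\textbf{Existence and the estimate \eqref{d1c}.} Fix $\epsilon>0$. By Proposition \ref{P4} the map $x\mapsto u_x(x,t)$ is bounded uniformly in $t$ and in $\varpi$, and by Remark \ref{R1} we have $D_pH(x,p)=H_0'(p)$, independent of $x$ and continuous; hence the drift $b(x,t):=-D_pH(x,\varpi(t)+u_x(x,t))$ is bounded and measurable, $\|b\|_\infty\le C$, with $C$ depending only on the uniform Lipschitz bound of $u$ and the bound for $\varpi$. Since $\epsilon>0$, the martingale problem for
\[
d\bx_t=b(\bx_t,t)\,dt+\sqrt{2\epsilon}\,dW_t,\qquad \bx_0\sim{\bar m},
\]
is well posed for bounded measurable $b$; let $\bx$ be a weak solution and put $m(t):=\mathrm{Law}(\bx_t)$, so that $m$ is a weak solution of \eqref{fp}. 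As ${\bar m}\in C^\infty_c$ and $b$ is bounded, $\sup_t E\big[|\bx_t|^2\big]\le C$, whence $m(t)\in\Pp$ for all $t$. Using the synchronous coupling $(\bx_t,\bx_{t+h})$,
\[
d_1(m(t),m(t+h))\le E\big[|\bx_{t+h}-\bx_t|\big]\le \int_t^{t+h} E|b(\bx_s,s)|\,ds+\sqrt{2\epsilon}\,E|W_{t+h}-W_t|\le Ch+C'\sqrt h,
\]
which yields \eqref{d1c} and $m\in C([0,T],\Pp)$.

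\textbf{Compactness for the stability part.} Assume now Assumptions \ref{A3} and \ref{A4}, and take $\varpi_n\to\varpi$ uniformly with data $u_n,m_n$. The estimates above are uniform in $n$: since the Lipschitz bound for $u_n$ (Proposition \ref{P4}) and the bound for $\varpi_n$ are uniform, $\|b_n\|_\infty\le C$ and $\sup_{n,t}E\big[|\bx^n_t|^2\big]\le C$. The uniform second moment gives tightness together with uniform integrability of first moments, so $\{m_n(t)\}_{n,t}$ is relatively compact in the complete metric space $(\Pp_1,d_1)$ of probability measures with finite first moment; combined with the equicontinuity $d_1(m_n(t),m_n(t+h))\le Ch^{1/2}$, the Arzel\`a--Ascoli theorem produces a subsequence with $m_{n_k}\to\tilde m$ in $C([0,T],\Pp_1)$.

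\textbf{Identification of the limit and conclusion.} It remains to show $\tilde m$ solves \eqref{fp} with the limiting coefficients. By Proposition \ref{P6}, $u_{n,x}\to u_x$ only almost everywhere, so $b_n\to b=-D_pH(x,\varpi+u_x)$ a.e., while $m_{n_k}$ a priori converges only weakly. The terms $\int\psi_t m_n$ and $\epsilon\int\psi_{xx}m_n$ pass to the limit directly by $d_1$-convergence against the smooth compactly supported $\psi_t,\psi_{xx}$; the problematic term is $\int_0^T\!\!\int_\Rr \psi_x\,b_n\,m_n\,dx\,dt$, where coefficients converge a.e.\ but $m_n$ only weakly. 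Here I upgrade the convergence of $m_n$: for $\epsilon>0$ the densities solve a uniformly parabolic divergence-form equation with uniformly bounded drift, and Assumptions \ref{A3}--\ref{A4} supply uniform (in $n$) second-order/energy bounds on $m_n$; by De Giorgi--Nash--Moser (or Aubin--Lions via the time-equicontinuity) the $m_n$ are relatively compact in $L^1_{loc}$ and converge strongly, hence a.e., to $\tilde m$ along the subsequence. Then $\psi_x b_{n_k}m_{n_k}\to\psi_x b\,\tilde m$ a.e., dominated on the compact support of $\psi_x$ by $\|b_n\|_\infty$ times the local $L^\infty$/$L^1$ bound on $m_n$, and dominated convergence closes the passage to the limit, so $\tilde m$ is a weak solution of \eqref{fp} with drift $b$. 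Uniqueness of the weak solution of this linear Fokker--Planck equation for $\epsilon>0$ and bounded measurable drift (by well-posedness of the martingale problem, or duality against the backward equation) forces $\tilde m=m$; since every subsequence of $(m_n)$ has a further subsequence converging to the same $m$, the whole sequence converges in $C([0,T],\Pp_1)$.

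\textbf{Main obstacle.} The crux is the drift term $\int\psi_x D_pH(x,\varpi_n+u_{n,x})\,m_n$: the coefficient is not continuous in $x$ (as $u_x$ is merely BV) and converges only a.e.\ for Lebesgue measure, whereas $m_n$ converges only weakly. The nondegenerate smoothing available for $\epsilon>0$, quantified uniformly through Assumptions \ref{A3}--\ref{A4}, is precisely what converts weak convergence of $m_n$ into strong/a.e.\ convergence and makes the argument go through; this is also the reason the statement is confined to $\epsilon>0$.
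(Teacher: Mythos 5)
Your proposal is correct and follows essentially the same route as the paper: the existence and the $h^{1/2}$ modulus are exactly the SDE/coupling argument of the cited reference (Cardaliaguet's notes), and the stability part matches the paper's — Arzelà--Ascoli in $C([0,T],\Pp_1)$, an upgrade to strong ($L^2$, resp. $L^1_{loc}$) convergence of $m_n$ available because $\epsilon>0$, almost-everywhere convergence of the drift from Proposition \ref{P6}, and uniqueness of the linear Fokker--Planck equation to identify the limit. You merely spell out the steps the paper compresses into citations and one-line assertions, and you correctly isolate the drift term as the only delicate point.
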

\begin{proof}
The existence of a solution in $C([0,T], \Pp_1)$ and the estimate in \eqref{d1c} were proven in 
\cite{cardaliaguet}. We note that, 
for $\epsilon\leq \epsilon_0$, 
 the constant $C$ can be chosen to depend only on $\epsilon_0$, on the problem data, and on $\|\varpi\|_{L^\infty}$. 
Thus, by the Ascoli-Arzela theorem, we have that $m_n\to m$ in $C([0,T], \Pp_1)$ for some $m\in C([0,T], \Pp_1)$. Because $\epsilon>0$, $m_n\to m$ in, for example, 
$L^2(\Rr\times [0,T])$. Moreover, 
\eqref{fp} has a unique solution. 
Thus,  it suffices to check that $m$ solves  \eqref{fp}. Because $u^n_x\to u_x$, almost everywhere, by semiconcavity, we have for any $\psi\in C^\infty_c(\Rr\times [0, T])$
\[
\int_0^T \int_{\Rr} \psi_x D_pH(x, \varpi^n+u_x^n) m^n dx dt\to 
\int_0^T \int_{\Rr} \psi_x D_pH(x, \varpi+u_x) m dx dt, 
\]
which gives that $m$ is a weak solution of \eqref{fp}. 
\end{proof}

Next, we prove an estimate for solutions of the system comprising \eqref{hj} and \eqref{fp}.
\begin{pro}
\label{P8}
Consider the setting of Problem \ref{P1}  with 
$\epsilon> 0$ and suppose that Assumptions \ref{A1} and \ref{A4} hold. 
Let $(u,m)$ solve  \ref{hj} and \ref{fp}.
Then
\begin{equation}
\label{soe}
\int_0^T \int_{\Rr} D^2_{pp}H u_{xx}^2 m dx dt \leq C
\end{equation}
\end{pro}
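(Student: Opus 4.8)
The plan is to obtain \eqref{soe} as an \emph{a priori} identity: I would differentiate the Hamilton--Jacobi equation \eqref{hj} twice in $x$, pair the resulting equation with $m$, and use the Fokker--Planck equation \eqref{fp} to produce a cancellation that isolates the target integral $\int_0^T\!\int_\Rr D^2_{pp}H\,u_{xx}^2\,m$. The hypothesis $\epsilon>0$ is essential here: it supplies, via parabolic regularity, enough smoothness to differentiate $u$ three and four times and to treat $m$ as a genuine (decaying) function, so that all the integrations by parts below are legitimate.

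Concretely, I would set $w=u_{xx}$ and differentiate \eqref{hj} twice in $x$. The separated structure $H(x,p)=H_0(p)-V(x)$ from Remark \ref{R1}, which gives $D^2_{xp}H=0$ and $D^2_{xx}H=-V_{xx}$, kills the mixed term and leaves the transport-type equation
\[
-w_t + D^2_{pp}H\, w^2 + D_pH\, w_x = \epsilon\, w_{xx} + V_{xx},
\]
equivalently $D^2_{pp}H\,w^2 = w_t - D_pH\,w_x + \epsilon w_{xx} + V_{xx}$. I would then multiply by $m$ and integrate over $\Rr\times[0,T]$. The delicate term $\int_0^T\!\int_\Rr w_t\,m$ I would rewrite as $\int_\Rr[wm]_0^T - \int_0^T\!\int_\Rr w\,m_t$, substitute $m_t=(D_pH\,m)_x+\epsilon m_{xx}$ from \eqref{fp}, and integrate by parts in $x$. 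The point is that the transport contribution $\int w_x D_pH\,m$ and the viscosity contribution $\epsilon\int w_{xx}m$ so produced cancel \emph{exactly} against the corresponding terms on the right-hand side, leaving the clean identity
\[
\int_0^T\!\int_\Rr D^2_{pp}H\, u_{xx}^2\, m\,dx\,dt = \int_\Rr {\bar u}_{xx}\,m(\cdot,T)\,dx - \int_\Rr u_{xx}(\cdot,0)\,{\bar m}\,dx + \int_0^T\!\int_\Rr V_{xx}\,m\,dx\,dt.
\]

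It then remains to bound the three terms on the right. Since $\int_\Rr m(\cdot,t)\,dx=1$ for every $t$, the first term is controlled by $\|{\bar u}_{xx}\|_\infty$ (Assumption \ref{A4}) and the last by $\|V_{xx}\|_\infty\,T$ (the bound on $D^2_{xx}V$ from Assumption \ref{A3}). The middle term is the one requiring a trick: I cannot control $u_{xx}(\cdot,0)$ pointwise, so I would integrate by parts twice, using the compact support of ${\bar m}\in C^\infty_c(\Rr)$, to rewrite $\int_\Rr u_{xx}(\cdot,0)\,{\bar m}\,dx=\int_\Rr u(\cdot,0)\,{\bar m}_{xx}\,dx$, which is finite because $u(\cdot,0)$ is locally bounded (Proposition \ref{P4}) and ${\bar m}_{xx}$ is bounded with compact support (Assumption \ref{A4}). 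Note the left-hand integrand is nonnegative by the convexity of $H$ in $p$, so the upper bound \eqref{soe} indeed follows.

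I expect the main obstacle to be the rigorous justification of the formal computation rather than the algebra: specifically, ensuring there are no boundary contributions as $|x|\to\infty$ in the integrations by parts and that each of $w$, $w_x$, $w_{xx}$ is integrable against $m$. The Lipschitz bound (Proposition \ref{P4}) and the semiconcavity upper bound on $u_{xx}$ (Proposition \ref{P2}), combined with the spatial decay of $m$ for $\epsilon>0$, supply the growth control needed; should the direct manipulation be hard to justify, I would instead introduce a spatial cutoff, derive the identity with the resulting remainder terms, and pass to the limit using these uniform bounds.
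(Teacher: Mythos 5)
Your proposal is correct and follows essentially the same route as the paper, whose proof is exactly the one-line recipe you carry out: differentiate \eqref{hj} twice in $x$, multiply by $m$, and integrate by parts against \eqref{fp} so that the transport and viscosity terms cancel. Your added details --- the cancellation identity, the move $\int u_{xx}(\cdot,0)\,\bar m = \int u(\cdot,0)\,\bar m_{xx}$ explaining why Assumption \ref{A4} bounds $\bar m_{xx}$, and the observation that the $V_{xx}$ term really requires the bound from Assumption \ref{A3} --- are all consistent with what the paper leaves implicit.
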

\begin{proof}
We begin by differentiating  \eqref{hj} twice with respect to $x$, multiply by $m$, and integrate by parts using \eqref{fp}. 
\end{proof}

\begin{rem}
Formally, the previous estimates hold for $\epsilon=0$. However, the above proof requires that $u$ is three times differentiable, which is not usually the case. Nevertheless, the estimate in \eqref{soe} 
is uniform in $\epsilon$. 
\end{rem}

%\begin{pro}
%Suppose that Assumption \ref{A4} holds. Then, 
%for each bounded measurable function $v:\Rr\to \Rr$ and any real $\bar Q$ there exists a unique solution 
%$\nu$ 
%of the equation
%\[
%\int D_pH(x, \nu+v(x)){\bar m}(x)=-\bar Q.
%\]
%Moreover, $\nu$ is bounded by a constant that depends only on $\bar Q$ and on  $\|v\|_{L^\infty}$.
%\end{pro}
%\begin{proof}
%The proof follows from Assumption \ref{A4} and the boundedness of $v$. 
%\end{proof}

Finally, we consider the price-supply relation. 
Due to Remark \ref{R2} and to the Lipschitz continuity of $u$ given by Proposition \ref{P4}, 
there exists a unique $\vartheta_0$ such that 
\begin{equation}
\label{id}
\int_{\Rr} D_p H(x, \vartheta_0+u_x(x,0) ){\bar m}dx=-\bQ(0).
\end{equation}
Moreover, $\vartheta_0$ is bounded by a constant that depends only on the problem data. 

Next, we differentiate 
\[
\int_{\Rr} D_pH(x, \varpi +u_x) m dx=-\bQ(t)
\]
in time to get the identity
\begin{equation}
\label{ddsc}
\dot \varpi \int_{\Rr} D^2_{pp} H m dx
+
\int_{\Rr} \left[ D^2_{pp} Hu_{xt}m+ D_p H m_t \right]dx=-\bdQ. 
\end{equation}
% Differentiating \eqref{hj} in $x$ and combining the resulting identity with \eqref{fp}, we get the following identity
Differentiating \eqref{hj} in $x$ and substituting \eqref{fp} both quantities on the second term of the left hand side of \eqref{ddsc}, we get the following identity
\begin{align*}
\int_{\Rr} D^2_{pp} Hu_{xt}m+ D_p H m_t=&
\int_{\Rr} D^2_{pp} H \left(-\epsilon \Delta u_x+D_pH u_{xx}+ D_xH\right) m\\&+
\int_{\Rr} D_p H \left(\epsilon \Delta m+(m D_pH)_x\right).
\end{align*}
If Assumption \ref{A1} holds, we have by Remark \ref{R1} that $D^2_{xp}H=0$. Hence, 
\begin{equation}
\label{dynp}
\int_{\Rr} D^2_{pp} Hu_{xt}m+ D_p H m_t=
\int_{\Rr} D^2_{pp} H D_xH m+\epsilon D^{3}_{ppp}H u_{xx}^2 m. 
\end{equation}

Accordingly, we have the identity
\begin{equation}
\label{pode}
\dot \varpi \int_{\Rr} D^2_{pp} H m
=-\bdQ-\int_{\Rr} \lb D^2_{pp} H D_xH +\epsilon D^{3}_{ppp}H u_{xx}^2 \rb m. 
\end{equation}
Thus, given $\varpi$, we solve  \eqref{hj} and \eqref{fp} and define the following 
ordinary differential equation
\begin{equation}
\label{node}
\begin{cases}
\dot \vartheta=\frac{-\bdQ-\int_{\Rr} D^2_{pp} H(x, \varpi +u_x)  D_xH(x, \varpi +u_x)m  +\epsilon D^{3}_{ppp}H(x, \varpi +u_x)   u_{xx}^2 m }{ \int_{\Rr} D^2_{pp} H(x, \varpi +u_x) m}\\
\vartheta(0)=\vartheta_0,
\end{cases}
\end{equation}
where $\vartheta_0$ is determined by \eqref{id}.
Then, $(u, m, \varpi)$ solves \eqref{mainsys} if 
$\varpi$ solves \eqref{node}. 

\begin{pro}
\label{P9}
Consider the setting of Problem \ref{P1}  with 
$\epsilon> 0$
and suppose that Assumptions \ref{A1}--\ref{A5} hold. 
Suppose that $\varpi^n\to \varpi$ uniformly in $C([0,T])$. Let $u^n$,  $m^n$, and $\vartheta^n$ be the solutions to \eqref{hj},  \eqref{fp}, and
\ref{node} with $\varpi$ replaced by $\varpi^n$. 
Then, $\vartheta^n$ converges to $\vartheta$, uniformly in $C([0,T])$, where $\vartheta$ solves \eqref{node}.
Moreover, there exists a constant $C$ that depends only on the problem data but not on $\varpi$ such that 
$\|\vartheta\|_{W^{1,\infty}([0,T])}\leq C$.
\end{pro}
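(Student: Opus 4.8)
The plan is to exploit the fact that, for fixed $\varpi$ (and hence for the $u$ and $m$ solving \eqref{hj} and \eqref{fp}), the right-hand side of \eqref{node} does not depend on the unknown $\vartheta$: it is a prescribed function of $t$. Thus existence and uniqueness of the solution of \eqref{node} are immediate by direct integration,
\[
\vartheta(t)=\vartheta_0+\int_0^t f_\varpi(s)\,ds,\qquad
f_\varpi=\frac{-\bdQ-\int_\Rr D^2_{pp}H\,D_xH\,m+\epsilon\int_\Rr D^3_{ppp}H\,u_{xx}^2\,m}{\int_\Rr D^2_{pp}H\,m},
\]
once we know $f_\varpi\in L^1([0,T])$; here $\vartheta_0$ is the quantity determined by \eqref{id}, bounded in terms of the data by Remark \ref{R2} and Proposition \ref{P4}. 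The statement then reduces to (i) estimating $f_\varpi$ and (ii) passing to the limit in these integrals along $\varpi^n\to\varpi$.

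For the bound, I would first control the denominator from below: since $D^2_{pp}H>\theta$ by Assumption \ref{A5} and $m(\cdot,t)$ is a probability measure, $\int_\Rr D^2_{pp}H\,m\geq\theta>0$ uniformly in $t$ and $\varpi$. In the numerator, $\bdQ$ is bounded because $\bQ\in C^\infty$, while $\int_\Rr D^2_{pp}H\,D_xH\,m$ is bounded using Remark \ref{R3} (so $D^2_{pp}H$ is bounded), the identity $D_xH=-V'$ coming from \eqref{sf} together with Assumption \ref{A2} (so $D_xH$ is bounded), and the fact that $m$ is a probability measure; these bounds are uniform in $\varpi$ and $\epsilon$. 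The only delicate term is $\epsilon\int_\Rr D^3_{ppp}H\,u_{xx}^2\,m$. Using $|D^3_{ppp}H|\le C$ from Assumption \ref{A5}, integrating in time, and invoking \eqref{soe} of Proposition \ref{P8}, I obtain
\[
\int_0^T\Big|\epsilon\int_\Rr D^3_{ppp}H\,u_{xx}^2\,m\Big|\,dt\le\frac{C\epsilon}{\theta}\int_0^T\!\!\int_\Rr u_{xx}^2\,m\,dx\,dt\le C,
\]
with $C$ independent of $\varpi$ and $\epsilon$. Combined with the $L^\infty$ bounds on the remaining terms, this gives $\|\vartheta\|_{L^\infty}\le|\vartheta_0|+\int_0^T|f_\varpi|\,dt\le C$, uniformly in $\varpi$.

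The pointwise bound on $\dot\vartheta$, and hence the full $W^{1,\infty}$ estimate, is where the main obstacle lies: \eqref{soe} controls $u_{xx}^2m$ only in spacetime, not pointwise in $t$. For $\epsilon>0$ I would close this gap with parabolic regularity. Since $u_x$ is bounded uniformly in $\varpi$ and $\epsilon$ by Proposition \ref{P4}, and $\varpi$ is bounded by the $L^\infty$ estimate just obtained, equation \eqref{hj} written as $\epsilon u_{xx}=-u_t+H_0(\varpi+u_x)-V$ is uniformly parabolic with controlled right-hand side; using the global second-order bounds on the data in Assumptions \ref{A3}--\ref{A4} and the one-sided bound from the semiconcavity of Proposition \ref{P2}, this yields a bound on $\|u_{xx}\|_{L^\infty(\Rr\times[0,T])}$ depending on $\epsilon$ and the data but not otherwise on $\varpi$. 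Then $\int_\Rr u_{xx}^2\,m\le\|u_{xx}\|_{L^\infty}^2$ is bounded pointwise in $t$, and together with the denominator lower bound and the $L^\infty$ estimates above this gives $\|\dot\vartheta\|_{L^\infty}\le C$, hence $\|\vartheta\|_{W^{1,\infty}}\le C$.

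Finally, for $\vartheta^n\to\vartheta$ I would prove $\vartheta_0^n\to\vartheta_0$ and $f_{\varpi^n}\to f_\varpi$ in $L^1$, and then conclude from
\[
\sup_{[0,T]}|\vartheta^n-\vartheta|\le|\vartheta_0^n-\vartheta_0|+\int_0^T|f_{\varpi^n}-f_\varpi|\,dt.
\]
The convergence $\vartheta_0^n\to\vartheta_0$ follows from \eqref{id}, using that $u^n_x\to u_x$ almost everywhere (Proposition \ref{P6}), that the $\vartheta_0^n$ are uniformly bounded, and the strict monotonicity of $p\mapsto D_pH$ from Remark \ref{R2}, which pins down the limit. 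For $f_{\varpi^n}\to f_\varpi$, the terms with bounded integrands pass to the limit using $m^n\to m$ in $C([0,T],\Pp)$ (Proposition \ref{P7}) and $u^n_x\to u_x$ a.e.; the quadratic term $\int_\Rr D^3_{ppp}H\,(u^n_{xx})^2m^n$ is again the delicate one, and here I would appeal once more to parabolic regularity for fixed $\epsilon>0$, whose uniform $C^{2,\alpha}$ bounds on compact sets give $u^n_{xx}\to u_{xx}$ locally uniformly, so that $(u^n_{xx})^2m^n\to u_{xx}^2m$ and the integral converges. I expect the interchange of the limit with the $u_{xx}^2$ integral to be the technical crux, and the reason the argument is genuinely confined to $\epsilon>0$.
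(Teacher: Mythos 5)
Your argument is correct and follows the same overall strategy as the paper: integrate the ODE \eqref{node}, bound the denominator below by $\theta$ using Assumption \ref{A5} and the fact that $m(\cdot,t)$ is a probability measure, bound the numerator via Remark \ref{R3}, the Lipschitz bound on $V$, and Proposition \ref{P8}, and pass to the limit using Propositions \ref{P6} and \ref{P7}. Where you genuinely diverge is in the two delicate points that the paper treats tersely. For the $W^{1,\infty}$ bound, the paper cites only the spacetime estimate \eqref{soe}, which, as you rightly observe, controls the term $\epsilon\int_{\Rr} D^3_{ppp}H\,u_{xx}^2\,m\,dx$ only in $L^1$ of time; your extra parabolic-regularity step giving an $L^\infty$ bound on $u_{xx}$ (with a constant depending on $\epsilon$ but not on $\varpi$) is what actually converts this into a pointwise bound on $\dot\vartheta$, and it is consistent with the statement, which only requires independence of $\varpi$. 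For the convergence $\vartheta^n\to\vartheta$, the paper argues weakly --- it shows $\int_0^T\psi\,\dot\vartheta_n\,ds\to\int_0^T\psi\,\dot\vartheta\,ds$ for continuous $\psi$ and upgrades to uniform convergence by equicontinuity and Arzel\`a--Ascoli --- whereas you prove $L^1$ convergence of the right-hand side directly, again via interior Schauder estimates for the quadratic term; your route uses more machinery but makes explicit the convergence $\vartheta_0^n\to\vartheta_0$ of the initial values, which the paper's compactness argument needs but does not spell out. The only caution is that your $\epsilon$-dependent constants would not survive the vanishing-viscosity limit invoked later for $\epsilon=0$; that is outside the scope of this proposition, but it explains why the paper prefers to keep its estimates $\epsilon$-uniform where it can.
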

\begin{proof}
The bound in $W^{1,\infty}([0,T])$ for $\vartheta$ is a consequence of Remark \ref{R2} and of
the bounds in Assumption \ref{A5}, in Remark \ref{R3}, and in Proposition \ref{P8}. 
	
According to Proposition \ref{P6}, the uniform convergence of  $\varpi_n\to \varpi$ gives the convergence 
of $u^n_x\to u_x$, almost everywhere. In addition, Proposition \ref{P7} gives the convergence $m_n\to m$ in 
$C([0,T], \Pp)$. Because $D^2_{pp}H$ is bounded from below by Assumption \ref{A5}, we have the convergence
of the right-hand side of \eqref{node} as follows, for any $\psi\in C([0,T])$, 
\[
\int_0^T \psi \dot \vartheta_n ds\to \int_0^T \psi\dot \vartheta ds. 
\]
Also, because the family $\vartheta_n$ is equicontinuous, any subsequence has a further convergent 
subsequence that must converge to $\vartheta$. Thus, $\vartheta^n\to \vartheta$, uniformly. 
\end{proof}

With the preceding estimates, we can now prove a fixed-point result and show the existence of a solution for $\epsilon>0$. 

\begin{proof}[Proof of Theorem \ref{T1} - part 1, existence for $\epsilon\geq 0$]
	
We begin by addressing the case $\epsilon>0$. 	
According to Proposition \ref{P9}, the map $\varpi \to \vartheta$ determined by 
\eqref{hj},  \eqref{fp}, and
\eqref{node} is continuous in $C([0,T])$, bounded,  and compact due to the $W^{1, \infty}$ bound for $\varpi$. 
Thus, by Schauder's fixed-point theorem, it has a fixed point. 

Now, we examine the case $\epsilon=0$. The key 
difficulty is the continuity of the map $\varpi\to m$ in the case $\epsilon=0$. 
To overcome this difficulty, we use 
 the vanishing viscosity method and the techniques in \cite{E3}.

Let $(u^\epsilon, m^\epsilon, \varpi^\epsilon)$ solve \eqref{mainsys} with $\epsilon>0$. 
By the above, we have that $\varpi^\epsilon$ is uniformly bounded. 
Moreover, by Proposition \ref{P4},  $u^\epsilon$ is uniformly locally bounded and Lipschitz. Therefore, 
as $\epsilon\to 0$,  extracting a subsequence if necessary, 
$\varpi^\epsilon \to \varpi$ and $u^\epsilon\to u$ where $u$ is a viscosity solution of \eqref{hj}. 

Now, we introduce a phase-space measure $\mu^\epsilon$ as follows
\[
\int_0^T \int_{\Rr^2} \psi(x, p, t) d\mu^\epsilon(x,p,t)=
\int_0^T \int_{\Rr} \psi(x, \varpi^\epsilon+u_x^\epsilon, t) m^\epsilon dx dt
\]
for all $\psi\in C_b(\Rr\times \Rr \times [0,T])$.  Because $m^\epsilon\in C([0,T], \Rr)$ with a modulus of continuity that is uniform in $\epsilon$, as $\epsilon\to 0$,  we have $\mu^\epsilon \rightharpoonup \mu$; 
that is 
\[
\int_0^T \int_{\Rr^2} \psi d\mu^\epsilon \to \int_0^T \int_{\Rr^2} \psi d\mu. 
\]
Moreover,{\color{red} due to the strict convexity of the Hamiltonian}, arguing as in \cite{E3}, we have
\begin{align*}
&\int_0^T \int_{\Rr^2} 
\psi_t-D_pH(x,p)D_x\psi d\mu\\&\quad =\int_{\Rr} \psi(x,T) m(x,T)dx-\int_{\Rr} \psi(x,0) {\bar m}(x)dx. 
\end{align*}

Next, we fix $\delta>0$ and consider a standard mollifier $\eta_\delta$. We
define
\[
v^\delta=\eta_\delta*u. 
\]
We note that $|D^2 v^\delta|\leq \frac C {\delta^2}$. 
Then, using the uniform convexity of the Hamiltonian, we get
{\color{red}
\[
-v^\delta_t+\eta_\delta*|u_x-v^\delta_x|^2 + H(x, \varpi+v^\delta_x)\leq O(\delta). 
\]
}
Therefore, $w=v^\delta-u^\epsilon$ satisfies
\begin{align*}
&-w_t+D_p H (x, \varpi^\epsilon+u_x^\epsilon) w_x-\epsilon w_{xx}\\
&+\eta_\delta*|u_x-v^\delta_x|^2 +\gamma |\varpi+v^\delta_x-\varpi^\epsilon-u_x^\epsilon|^2
\leq O(\delta)+O(\frac{\epsilon}{\delta^2}). 
\end{align*}
Integrating with respect to $m^\epsilon$, we conclude that 
\[
\int_0^T\int_{\Rr^2}
\eta_\delta*|u_x-v^\delta_x|^2 +\gamma |\varpi+v^\delta_x-p|^2 d\mu^\epsilon
\leq O(\delta)+O(\frac{\epsilon}{\delta^2})+\|v^\delta-u^\epsilon\|_{L^\infty}.
\]
Next, we let $\epsilon\to 0$, to get
\[
\int_0^T\int_{\Rr^2}
\eta_\delta*|u_x-v^\delta_x|^2 +\gamma |\varpi+v^\delta_x-p|^2 d\mu
\leq O(\delta).
\]
Finally, by letting $\delta\to 0$, we conclude that $m$-almost every point is a point of approximate continuity of $u_x$. Therefore, $v^\delta_x\to u_x$ almost everywhere. 
Hence, $p=\varpi+u_x$ $\mu$-almost everywhere.  Therefore,
we obtain 
\begin{align*}
&\int_0^T \int_{\Rr^2} 
\left(\psi_t-D_pH(x,\varpi+u_x)D_x\psi \right)d\mu\\&\quad =\int_0^T \int_{\Rr} 
\left(\psi_t-D_pH(x,\varpi+u_x)D_x\psi \right)m dx dt\\&\quad  =\int_{\Rr} \psi(x,T) m(x,T)dx-\int_{\Rr} \psi(x,0) {\bar m}(x)dx,  
\end{align*}
which gives that $m$ solves \eqref{fp} with $\epsilon=0$. 

Note also, that the preceding reasoning implies that $u$ is differentiable almost everywhere with respect to $m$. 
\end{proof}

Finally, we record two additional results for \eqref{mainsys}. The first is an energy estimate that 
is similar to other results in MFG. 

\begin{pro}
Let $(u, m , \varpi)$ be the solution of Problem \ref{P1} constructed in Theorem \ref{T1}. Suppose that
there exists $C>0$ such that 
\[
p D_p H(x, p)-H(x,p)\geq \frac {1}{C} H(x, p)-C.
\]	
Then,	
\[
\int_0^T \int_{\Rr} H(x,\varpi+u_x) (m_0+m) dx dt \leq C.
\]
\end{pro}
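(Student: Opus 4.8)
The plan is to run the classical first-order (energy) estimate for mean-field games, adapted to the price constraint. The central object is $\int_\Rr u(x,t)\,m(x,t)\,dx$, whose time derivative encodes the duality between the Hamilton-Jacobi and Fokker-Planck equations. First I would compute $\frac{d}{dt}\int_\Rr u m\,dx = \int_\Rr(u_t m + u m_t)\,dx$ by substituting $u_t = H(x,\varpi+u_x)-\epsilon u_{xx}$ from \eqref{hj} and $m_t = (D_pH(x,\varpi+u_x)m)_x + \epsilon m_{xx}$ from \eqref{fp}. Integrating by parts in $x$, the two viscosity terms cancel because $\int_\Rr u_{xx} m\,dx = \int_\Rr u\, m_{xx}\,dx = -\int_\Rr u_x m_x\,dx$ (using the decay of $m$), while the transport term gives $\int_\Rr u (D_pH\, m)_x\,dx = -\int_\Rr u_x D_pH\, m\,dx$. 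This yields the identity $\frac{d}{dt}\int_\Rr u m\,dx = \int_\Rr\bigl(H(x,\varpi+u_x) - u_x D_pH(x,\varpi+u_x)\bigr) m\,dx$.

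Next I would bring in the price structure. Writing $p = \varpi+u_x$, so that $u_x = p-\varpi$, I rewrite the integrand as $H - u_x D_pH = -\bigl(p D_pH - H\bigr) + \varpi D_pH$. Integrating the last term against $m$ and invoking the balance constraint $\int_\Rr D_pH(x,\varpi+u_x)m\,dx = -\bQ(t)$ from \eqref{mainsys}, I obtain $\frac{d}{dt}\int_\Rr u m\,dx = -\int_\Rr (pD_pH - H)m\,dx - \varpi(t)\bQ(t)$. Integrating in $t$ over $[0,T]$ and using $u(\cdot,T)=\bar u$ and $m(\cdot,0)=\bar m$, I arrive at $\int_0^T\!\int_\Rr (pD_pH - H)m\,dx\,dt = \int_\Rr u(x,0)\bar m\,dx - \int_\Rr \bar u\, m(x,T)\,dx - \int_0^T \varpi\bQ\,dt$. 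Each term on the right is bounded by the data: $u$ is Lipschitz and bounded (Proposition~\ref{P4}) and $\bar m$ has compact support, so $\int_\Rr u(\cdot,0)\bar m$ is bounded; $\bar u$ is Lipschitz while $m(\cdot,T)$ has a first moment controlled through $\bar m$ and the $d_1$-estimate \eqref{d1c} of Proposition~\ref{P7}, so $\int_\Rr \bar u\, m(T)$ is bounded; and $\varpi$ is bounded (Proposition~\ref{P9}) with $\bQ\in C^\infty$, so $\int_0^T\varpi\bQ$ is bounded. Hence $\int_0^T\!\int_\Rr(pD_pH-H)m\,dx\,dt \le C$.

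Then the hypothesis closes the estimate: from $pD_pH - H \ge \frac{1}{C}H - C$ and $\int_\Rr m\,dx = 1$ I get $\frac{1}{C}\int_0^T\!\int_\Rr H m\,dx\,dt - CT \le \int_0^T\!\int_\Rr(pD_pH-H)m\,dx\,dt \le C$, which gives $\int_0^T\!\int_\Rr H(x,\varpi+u_x)m\,dx\,dt \le C$. For the $m_0=\bar m$ contribution I argue directly: since $\bar m\in C^\infty_c$, on $\supp\bar m$ the quantities $x$, $u_x$ (bounded by the Lipschitz estimate) and $\varpi$ are all bounded, so $H(x,\varpi+u_x)$ is bounded there and $\int_0^T\!\int_\Rr H\,\bar m\,dx\,dt \le CT$. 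Adding the two bounds yields the claim.

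The main obstacle is rigor rather than the formal computation. For $\epsilon>0$, parabolic regularity makes every integration by parts legitimate, in particular the cancellation of the viscosity terms and the use of $u$ as a test function against \eqref{fp}; for $\epsilon=0$, however, $u$ is only a Lipschitz, semiconcave viscosity solution and $m$ a weak solution, so these manipulations are not directly justified. I would therefore first establish the identity and the resulting bound for $\epsilon>0$, taking care that all constants depend only on the data and not on $\epsilon$ (which holds because the Lipschitz bound, the semiconcavity bound, the price bound, and the estimate \eqref{d1c} are all $\epsilon$-uniform), and then pass to the limit $\epsilon\to 0$ using the weak convergence $\mu^\epsilon\rightharpoonup\mu$ of the phase-space measures together with the identification $p=\varpi+u_x$ holding $\mu$-a.e., as obtained in the existence proof of Theorem~\ref{T1}. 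One must also verify that $\int_\Rr H m$ is genuinely finite rather than $-\infty$: this follows since $\varpi+u_x$ is bounded, so $H_0(\varpi+u_x)$ is bounded, while $V$ is globally Lipschitz and $m$ has a uniformly controlled finite first moment, so $\int_\Rr V m$ is finite.
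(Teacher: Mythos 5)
Your proof is correct and takes essentially the same route as the paper's: the paper multiplies the Hamilton--Jacobi equation by $\bar m - m$ and the Fokker--Planck equation by $u-\bar u$, adds, and integrates by parts, which is the same cross-duality computation you carry out via $\frac{d}{dt}\int_{\Rr} u\,m\,dx$ together with the balance constraint absorbing the $\varpi D_pH$ term. The only organizational difference is that the paper's choice of test functions produces the $H\,\bar m$ contribution directly from the duality, whereas you bound it separately using the compact support of $\bar m$; your discussion of the $\epsilon=0$ case and of the finiteness of $\int_{\Rr} H\,m\,dx$ is more careful than the paper's one-line argument.
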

\begin{proof}
We take the first equation in \eqref{mainsys} and multiply it by $\bar m-m$, and the second equation 
by
$u-\bar u$. Adding the resulting expressions and integrating by parts results in the desired estimate.
\end{proof}

The last result in this section concerns the regularity of the solutions \eqref{hj} in the 
case where both the  potential and terminal data are convex.  

\begin{pro}
\label{PZ1}
Suppose that $\epsilon=0$,  that Assumptions \ref{A1},  \ref{A3}, and \ref{A6} hold and let $\varpi$ be a Lipschitz function. 
Then, the solution to \eqref{hj}
is differentiable in $x$ for every $x\in \Rr$. Moreover,  $u_{xx}$ is bounded. 
\end{pro}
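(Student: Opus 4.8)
The plan is to upgrade the one-sided second-order bound already available from semiconcavity to a two-sided bound by showing that, under Assumption \ref{A6}, the map $x\mapsto u(x,t)$ is convex for every $t$. Since a function that is simultaneously convex and semiconcave is $C^{1,1}$, this yields both differentiability at every point and boundedness of $u_{xx}$.

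First I would use the optimal-control representation of the viscosity solution of \eqref{hj} with $\epsilon=0$ established in the proof of Proposition \ref{P4}, namely
\[
u(x,t)=\inf_{\alpha}\int_t^T \lb \ell_0(\alpha)+\varpi\alpha+V(\bx)\rb ds+{\bar u}(\bx(T)),
\]
where $\bdx=\alpha$, $\bx(t)=x$, and the infimum is over bounded measurable controls. The decisive structural feature is that the dynamics are translation invariant in $x$: for a fixed control $\alpha$, the trajectory issuing from $x$ is $\bx(s)=x+\int_t^s\alpha(r)\,dr$, so it depends affinely on the initial point.

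Next I would prove convexity directly. Fix $x_0,x_1\in\Rr$, $\lambda\in[0,1]$, and set $x_\lambda=(1-\lambda)x_0+\lambda x_1$. Given $\delta>0$, choose $\delta$-optimal controls $\alpha_0,\alpha_1$ for the initial points $x_0,x_1$, with corresponding trajectories $\bx_0,\bx_1$. Take the admissible control $\alpha_\lambda=(1-\lambda)\alpha_0+\lambda\alpha_1$ from $x_\lambda$; by translation invariance its trajectory is exactly $\bx_\lambda=(1-\lambda)\bx_0+\lambda\bx_1$. Using the convexity of $\ell_0$ (Assumption \ref{A1}), the linearity of $\alpha\mapsto\varpi\alpha$, and the convexity of $V$ and ${\bar u}$ (Assumption \ref{A6}), each term of the cost functional evaluated along $(\alpha_\lambda,\bx_\lambda)$ is dominated by the corresponding convex combination of the costs along $(\alpha_0,\bx_0)$ and $(\alpha_1,\bx_1)$. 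Taking the infimum on the left and letting $\delta\to0$ yields
\[
u(x_\lambda,t)\leq(1-\lambda)u(x_0,t)+\lambda u(x_1,t),
\]
so $x\mapsto u(x,t)$ is convex.

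Finally, I would combine this with Proposition \ref{P2}: semiconcavity gives $u(x+h,t)-2u(x,t)+u(x-h,t)\leq Ch^2$, while convexity gives $u(x+h,t)-2u(x,t)+u(x-h,t)\geq0$. Hence the left and right derivatives of $u(\cdot,t)$ coincide at every $x$, so $u$ is differentiable everywhere, $u_x(\cdot,t)$ is Lipschitz with constant $C$, and $0\leq u_{xx}\leq C$ in the a.e. and distributional sense, giving boundedness of $u_{xx}$. The argument is essentially routine once convexity is in hand; I expect the only delicate points to be the admissibility of the convex-combined trajectory (which rests entirely on the $x$-independent, control-affine form of the dynamics) and the passage through $\delta$-optimal controls, which lets me avoid asserting existence of exact minimizers.
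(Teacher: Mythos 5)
Your proposal is correct and follows essentially the same route as the paper: the paper's proof also derives convexity of $x\mapsto u(x,t)$ from Assumption \ref{A6} by inspecting the variational problem and then combines it with the semiconcavity from Proposition \ref{P2} to conclude differentiability and boundedness of $u_{xx}$. You have simply written out in full the convexity argument that the paper leaves to ``direct inspection.''
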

\begin{proof}
Due to Assumption \ref{A6}, we see that $u(x,t)$ is convex in $x$ by direct inspection of
the variational problem  \eqref{J}. By Proposition \ref{P2}, $u$ is semiconcave in $x$. This
gives the bound for $u_{xx}$ and the differentiability of $u$ in $x$. 
\end{proof}	
%The optimal trajectories for the control problem \eqref{J} solve Hamilton's equations
%\begin{equation}
%\label{hjdd}
%\begin{cases}
%\dot {\bf p}=D_xH(\bx, \varpi+{\bf p})\\
%\dot \bx=-D_pH(\bx, \varpi+{\bf p}) 
%\end{cases}
%\end{equation}
%with initial terminal-conditions
%\[
%\begin{cases}
%{\bf p}(T)=\bar u_x(\bx(T))\\
%\bx(0)=x. 
%\end{cases}
%\]
%To prove the theorem,
%it suffices to check that characteristics are unique. Under the assumptions of the proposition 
%$H$ is strictly convex in $p$ and concave in $x$. Suppose there is another solution 
%$(\tilde  {\bf p}, \tilde \bx)$. Then, subtracting the corresponding equations, 
%and multiplying the equations for $p$  by $\tilde \bx- \bx$ and ones for $x$ by 
%$\tilde {\bf p}-{\bf p}$, we conclude that 
%\begin{align*}
%&(u_x(\bx(T))-u_x(\tilde \bx(T))) ( \bx(T)- \tilde\bx(T))\\
%&+\int_0^T (-D_xH(\bx, \varpi+{\bf p})+D_xH(\tilde \bx, \varpi+{\tilde \bf p}))( \bx- \tilde\bx)\\
%&+\int_0^T (D_pH(\bx, \varpi+{\bf p})-D_pH(\tilde \bx, \varpi+{\tilde \bf p}))( {\bf p}- \tilde{\bf p})\leq 0. 
%\end{align*}
%Because of the strict convexity of $H$, we obtain ${\bf p}= \tilde{\bf p}$. Using \eqref{hjdd}
%and the fact that $\bx(0)= \tilde\bx(0)$, 
%we conclude that 
%$\bx= \tilde\bx$. 
%
%Finally, to obtain the bound for $u_{xx}$, we consider two characteristics, 
%$\bx$ and $\tilde \bx$ with distinct initial conditions, $\bx(t)=x$ and $\tilde \bx(t)=x+h$. Then, 
%repeating the previous step, we get 
%\[
%(\bar u_x(\bx(T))-\bar u_x(\tilde \bx(T))) ( \bx(T)- \tilde\bx(T))
%\leq (u_x(x, t)-u_x(x+h, t))h 
%\]

%\end{proof}

The preceding proposition implies the regularity of the solutions of Problem \ref{P1}, as stated in the next Corollary. 
\begin{cor}
\label{C1}
 Suppose that Assumptions \ref{A1}--\ref{A6} hold
 and that $\epsilon=0$. 
 Then, there exists a solution $(u, m, \varpi)$ of Problem \ref{P1} 
with $u$ differentiable in $x$ for every $x$ and $u_{xx}$ bounded. Moreover, $m$ is also bounded.  
\end{cor}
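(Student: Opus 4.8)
The plan is to combine the existence result already obtained in the first part of Theorem \ref{T1} with the regularity furnished by Proposition \ref{PZ1}, and then to propagate the resulting bound on $u_{xx}$ to a bound on $m$ via the method of characteristics.

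First, I would invoke the existence part of Theorem \ref{T1} for $\epsilon=0$, which holds already under Assumptions \ref{A1}--\ref{A5}. This produces a solution $(u,m,\varpi)$ of Problem \ref{P1} in which $\varpi$ is Lipschitz continuous in time. Since Assumptions \ref{A1}, \ref{A3}, and \ref{A6} are now all in force and $\varpi$ is Lipschitz, Proposition \ref{PZ1} applies verbatim to the Hamilton-Jacobi equation \eqref{hj} solved by this $u$. It yields that $x\mapsto u(x,t)$ is differentiable at every point and that $u_{xx}$ is bounded; more precisely, the convexity from Assumption \ref{A6} gives $u_{xx}\geq 0$ while the semiconcavity of Proposition \ref{P2} gives $u_{xx}\leq C$, so that $0\leq u_{xx}\leq C$.

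Next I would establish the boundedness of $m$. Because $u_{xx}$ is bounded, $u_x$ is Lipschitz in $x$, and since $D_pH$ is smooth and $\varpi$ is Lipschitz, the velocity field $v(x,t)=-D_pH(x,\varpi(t)+u_x(x,t))$ is Lipschitz in $x$. Hence the characteristic system $\dot x=v(x,t)$ has a well-defined flow, and the transport equation \eqref{te} with $\epsilon=0$ has $m$ as its unique solution, given by the push-forward of ${\bar m}$ along this flow. Writing $J(t)$ for the Jacobian of the flow, conservation of mass gives $m(x(t),t)\,J(t)={\bar m}(x(0))$. Differentiating, $\tfrac{d}{dt}\log J = v_x = -D^2_{pp}H\,u_{xx}$, where I have used $D^2_{xp}H=0$ from Remark \ref{R1}; by Remark \ref{R3} and the bound $0\leq u_{xx}\leq C$ this gives $-\bar C C\leq v_x\leq 0$, with $\bar C$ the upper bound for $D^2_{pp}H$. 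Integrating yields $J(t)\geq e^{-\bar C C T}>0$, and therefore $m$ is bounded above by $\|{\bar m}\|_{L^\infty}\,e^{\bar C C T}$.

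The main obstacle I anticipate is precisely this last bound on $m$: one must rule out the focusing of characteristics that would make $J$ degenerate and $m$ blow up. This is exactly where the two-sided control $0\leq u_{xx}\leq C$ is essential—the convexity coming from Assumption \ref{A6} providing the lower bound, and the semiconcavity of Proposition \ref{P2} the upper bound—combined with the boundedness of $D^2_{pp}H$ from Remark \ref{R3}. With those in hand, Gr\"onwall's inequality keeps $J$ away from zero and the conclusion follows. I should also remark that the $m$ constructed in this way coincides with the weak solution delivered by Theorem \ref{T1}, since the Lipschitz velocity field renders the weak solution of \eqref{te} unique.
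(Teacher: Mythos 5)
Your proposal is correct and follows essentially the same route as the paper: the paper's proof of Corollary \ref{C1} likewise combines Proposition \ref{PZ1} with the fact that the transport equation with (locally) Lipschitz coefficients has a unique bounded weak solution, merely citing the latter as standard where you work out the characteristics and the Jacobian estimate explicitly. Your expanded argument is sound; the only minor imprecision is attributing the non-degeneracy of the Jacobian to the two-sided bound on $u_{xx}$, when it is the upper (semiconcavity) bound alone that prevents focusing, the lower (convexity) bound being what makes the velocity field Lipschitz and $u$ differentiable.
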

\begin{proof}
The result follows
by combining Proposition \ref{PZ1} with 
the fact that the transport equation with locally Lipschitz coefficients
has a unique weak solution in $L^\infty$.  
\end{proof}

\begin{proof}[Proof of Theorem \ref{T1} - part 2, additional regularity for $\epsilon=0$]
Additional regularity for the case where Assumption \ref{A6} holds and $\epsilon=0$ follows from Corollary \ref{C1}.
\end{proof}

\section{Uniqueness}
\label{suniq}

Now, we examine the uniqueness of solutions. We begin by observing that \eqref{mainsys} 
can be written as a monotone operator. As a consequence, we obtain a uniqueness result. 

%\subsection{Monotonicity} % (fold)
%\label{sub:monotonicity_properties}

We set
\[\Omega_T=\Rr\times [0,T],
\]
and
\[
\begin{split}
D=& (C^\infty(\Omega_T)\cap C([0,T], \Pp))\times (C^\infty(\Omega_T)\cap W^{1, \infty}(\Omega_T)) \times C^\infty([0,T]),\\
D_{+}=&\{ (m,u,\varpi) \in D~\mbox{s.t.}~m>0   \},\\
D^b=&\{ (m,u,\varpi) \in D~\mbox{s.t.}~m(x,0)={\bar m}(x),~u(x,T)={\bar u}(x)\},\\
D^b_{+}=&D^b \cap D_{+},\\
\end{split}
\]
Then, we define $A:D^b_{+} \to D$ as
\begin{equation}\label{eq:A}
\begin{split}
A \begin{bmatrix}
m\\
u\\
\varpi
\end{bmatrix}=& A_1 \begin{bmatrix}
m\\
u\\
\varpi
\end{bmatrix}+A_2 \begin{bmatrix}
m\\
u\\
\varpi
\end{bmatrix}\\
=&\begin{bmatrix}
u_t+\epsilon u_{xx}\\
m_t - \epsilon m_{xx}\\
0
\end{bmatrix}
+
\begin{bmatrix}
-H(x,Du+\varpi)\\
-\mathrm{div}(m D_p H(x,\varpi+u_x) )\\
\int\limits_{\Omega} m D_p H(x,\varpi+u_x)dx+\bQ(t)
\end{bmatrix}.
\end{split}
\end{equation}
Furthermore, for $w=(m,u,\varpi),~\tilde{w}=(\tilde{m},\tilde{u},\tilde{\varpi}) \in D$, we set
\[
\langle w,\tilde{w} \rangle = \int\limits_{\Omega_T} \lb m \tilde{m}+u\tilde{u} \rb dxdt + \int\limits_0^T \varpi \tilde{\varpi} dt.
\]
Then, $A$ is a {\em monotone operator} if
\begin{equation*}
\langle A[w]-A[\tilde w],w-\tilde w \rangle \geq 0\quad \mbox{for all}\quad w,\tilde{w} \in D^b_{+}.
\end{equation*}

Under the convexity of the map $p\mapsto H(x,p)$, $A$ is a monotone operator. 
\begin{pro}
\label{MP}
	Suppose the map $p\mapsto H(x,p)$ is convex. Then $A$ is a monotone operator.
\end{pro}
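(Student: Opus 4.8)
The plan is to carry out the classical Lasry--Lions monotonicity computation, adapted to accommodate the price component and the supply--demand constraint. Write $w=(m,u,\varpi)$ and $\tilde w=(\tilde m,\tilde u,\tilde\varpi)$, and note that in $\langle\cdot,\cdot\rangle$ the first output slot of $A$ (the Hamilton--Jacobi slot) is tested against the $m$-variable, the second (the Fokker--Planck slot) against the $u$-variable, and the third (the constraint) against $\varpi$. The first step is to expand $\langle A[w]-A[\tilde w],w-\tilde w\rangle$ as $I_1+I_2+I_3$ accordingly; the $\bQ$ terms cancel at once in $I_3$.

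Next I would dispose of the linear part $A_1$. Setting $\mu=m-\tilde m$ and $\nu=u-\tilde u$, the time-derivative contributions from $I_1$ and $I_2$ combine to $\int_{\Omega_T}\partial_t(\mu\nu)\,dx\,dt=\int_{\Rr}[\mu\nu]_{t=0}^{t=T}\,dx$, which vanishes because $w,\tilde w\in D^b_{+}$ force $\mu(\cdot,0)\equiv 0$ (common initial datum $\bar m$) and $\nu(\cdot,T)\equiv 0$ (common terminal datum $\bar u$). The $\epsilon$-diffusion contributions combine to $\epsilon\int_{\Omega_T}(\nu_{xx}\mu-\mu_{xx}\nu)\,dx\,dt$, which vanishes after transferring both spatial derivatives by parts. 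Thus only the nonlinear part $A_2$ survives.

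I would then integrate the divergence term in $I_2$ by parts in $x$, turning it into $\int_{\Omega_T}\bigl(m\,D_pH(x,\varpi+u_x)-\tilde m\,D_pH(x,\tilde\varpi+\tilde u_x)\bigr)(u_x-\tilde u_x)\,dx\,dt$. The decisive observation---the new structure contributed by the constraint---is that in $I_3$ the factor $\varpi-\tilde\varpi$ depends only on $t$, so it may be moved inside the $x$-integral; adding $I_3$ to the rewritten $I_2$ produces the same bracket tested against $(u_x-\tilde u_x)+(\varpi-\tilde\varpi)=p-\tilde p$, where $p=\varpi+u_x$ and $\tilde p=\tilde\varpi+\tilde u_x$. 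Collecting terms by the factor $m$ versus $\tilde m$, the whole nonlinear contribution becomes the pointwise integrand
\[
m\bigl[H(x,\tilde p)-H(x,p)+D_pH(x,p)(p-\tilde p)\bigr]+\tilde m\bigl[H(x,p)-H(x,\tilde p)-D_pH(x,\tilde p)(p-\tilde p)\bigr].
\]
Finally I would invoke convexity of $p\mapsto H(x,p)$: the two tangent-line inequalities make each bracket nonnegative, and since $m,\tilde m>0$ on $D^b_{+}$ the integrand is nonnegative, yielding $\langle A[w]-A[\tilde w],w-\tilde w\rangle\ge 0$.

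The algebra collecting the $m$- and $\tilde m$-terms is routine; the step that carries the content is the recombination of the constraint integral $I_3$ with the Fokker--Planck integral $I_2$, so that the gradient difference $u_x-\tilde u_x$ and the price difference $\varpi-\tilde\varpi$ reassemble into the single momentum difference $p-\tilde p$ on which convexity acts. The only technical point requiring care is justifying the spatial integrations by parts, i.e.\ the vanishing of the boundary terms as $x\to\pm\infty$; here one uses the decay of the probability densities $m,\tilde m$ together with the smoothness and $W^{1,\infty}$ bounds built into the definition of $D$.
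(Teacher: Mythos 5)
Your proposal is correct and follows essentially the same route as the paper's proof: the same splitting into the linear part $A_1$ (killed by the initial-terminal conditions and integration by parts) and the nonlinear part $A_2$, the same recombination of the constraint integral with the Fokker--Planck integral so that $u_x-\tilde u_x$ and $\varpi-\tilde\varpi$ reassemble into $p-\tilde p$, and the same pair of tangent-line convexity inequalities weighted by $m$ and $\tilde m$. Your closing remark on justifying the vanishing of spatial boundary terms is a point the paper leaves implicit, but it does not change the argument.
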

\begin{proof}
	Let $w=(m,u,\varpi),~\tilde{w}=(\tilde{m},\tilde{u},\tilde{\varpi}) \in D^b_{+}$. Then, 
	integrating by parts, we obtain
	\begin{equation*}
	\begin{split}
	&\langle A_1[w]-A_1[\tilde w],w-\tilde w \rangle\\
	=&\int\limits_{\Omega_T} ((u-\tilde{u})_t+\epsilon \Delta (u-\tilde u))  (m-\tilde{m})+\int\limits_{\Omega_T} ((m-\tilde{m})_t-\epsilon \Delta (m-\tilde m))  (u-\tilde{u})\\
	=&0,
	\end{split}
	\end{equation*}
	because $u-\tilde u$ and $m-\tilde m$ vanish at $t=0$ and $t=T$. Furthermore, we have that
	\begin{equation*}
	\begin{split}
	&\langle A_2[w]-A_2[\tilde w],w-\tilde w \rangle\\
	=&-\int\limits_{\Omega_T} (H(x, u_x+\varpi)-H(x,\tilde{u}_x+\tilde{\varpi}))(m-\tilde m) dxdt\\
	&-\int\limits_{\Omega_T} \mathrm{div}( m D_p H(x,u_x+\varpi)-\tilde m D_p H(x,\tilde{u}_x+\tilde{\varpi}))(u-\tilde u) dxdt\\
	&+\int\limits_0^T (\varpi -\tilde{\varpi}) \int_{\Rr} (m D_p H(x,u_x+\varpi)-\tilde m D_p H(x,\tilde{u}_x+\tilde{\varpi})) dx dt\\
	=&-\int\limits_{\Omega_T} (H(x, u_x+\varpi)-H(x,\tilde{u}_x+\tilde{\varpi}))(m-\tilde m) dxdt\\
	&+\int\limits_{\Omega_T} ( m D_p H(x,u_x+\varpi)-\tilde m D_p H(x,\tilde{u}_x+\tilde{\varpi}))(u_x-\tilde u_x) dxdt\\
	&+\int\limits_{\Omega_T} (m D_p H(x,u_x+\varpi)-\tilde m D_p H(x,\tilde{u}_x+\tilde{\varpi})) (\varpi -\tilde{\varpi})dx dt\\
	=&\int\limits_{\Omega_T}m \bigg(H(x,\tilde u_x+\tilde{\varpi})-H(x,u_x +\varpi)-( \tilde u_x +\tilde \varpi-u_x -\varpi) D_pH(x,u_x+\varpi)\bigg)dxdt\\
	&+\int\limits_{\Omega_T}\tilde m \bigg(H(x,u_x+\varpi)-H(x, \tilde u_x +\tilde \varpi)-( u_x + \varpi- \tilde u_x -\tilde \varpi) D_pH(x,\tilde u_x+\tilde \varpi)\bigg)dxdt\\
	\geq &0,
	\end{split}
	\end{equation*}
	by the convexity of $p\mapsto H(x,p)$. Combining the previous inequalities, we conclude that 
	\[
	\begin{split}
	&\langle A[w]-A[\tilde w], w- \tilde w \rangle \\
	=& \langle A_1[w]-A_1[\tilde w], w- \tilde w \rangle+\langle A_2[w]-A_2[\tilde w], w- \tilde w \rangle \geq 0.
	\end{split}
	\]
\end{proof}

Now, we discuss the last part of the proof of Theorem \ref{T1}. 

\begin{proof}[Proof of Theorem \ref{T1} - part 3, uniqueness]

Let $(m, u, \varpi)$ and $(\tilde m, \tilde u, \tilde \varpi)$ solve Problem \ref{P1}. If $\epsilon>0$
or if $\epsilon=0$ and Assumption \ref{A6} holds, 
we have $m$ and $\tilde m$ are absolutely continuous with respect to the Lebesgue measure.
Thus, 
the computations in the proof of Proposition \ref{MP}, combined with the uniform convexity of $H$
in Assumption \ref{A5}, give
\[
\int_0^T\int_{\Rr} |\varpi+u_x-\tilde \varpi -\tilde u_x|^2 (\tilde m+m)=0. 
\]
Therefore, $\varpi+u_x=\tilde \varpi +\tilde u_x$ almost everywhere. In both cases, this implies
\[
u_t=\tilde u_t, 
\]
almost everywhere and, thus, $u=\tilde u$. Finally, the uniqueness of the Fokker-Planck equation, for $\epsilon>0$ or for the transport equation, when $\epsilon=0$ and  Assumption \ref{A6} holds, give
$m=\tilde m$. 
\end{proof}

\section{Linear-quadratic models} % (fold)
\label{sec:linear_quadratic_models}
Here, we consider linear-quadratic price models. 
First, we examine the case without a potential and determine an explicit solution. 
Then, we introduce a quadratic potential that accounts for charge level preferences. In this last case, we describe a procedure to 
solve the problem, up to the inversion of Laplace transforms and solution of
ordinary differential equations. 
% provide a detailed analysis. 
%As in the preceding section, to simplify the presentation, we assume that the energy stored by each agent is a real number $x\in \Rr$.
%By removing the state constraint, the mathematical analysis is simplified substantially. 
%Moreover, while batteries cannot have negative charges and should not exceed the manufacturer specifications, 
%there are other storage alternatives that do not have hard boundaries; for example, by storing energy as heat in a household. 
%Moreover, it may happen that in a properly calibrated model, batteries may never  end up with a very low or very high charge and, thus, state constraints may be irrelevant. 
\subsection{State-independent quadratic cost} % (fold)
\label{sub:state_independent_quadratic_cost}
First, we consider the quadratic state-independent cost
\begin{equation} \label{eq:run_cost}
\ell(t,\alpha)= \frac{c}{2} \alpha^2 +\alpha\bp(t),
\end{equation}
where $c$ is a constant that accounts for the usage-depreciation of the battery.
% (in $\$ \text{h}/A^2$).
The corresponding MFG is
\begin{align} \label{eq:MFG_quadratic}
\begin{cases}
-u_t+\frac{(\bp(t)+u_x)^2}{2c}=0\\
m_t-\frac{1}{c}(m(\bp(t)+u_x) )_x=0\\
\frac{1}{c} \int_{\Rr} (\bp(t)+u_x) m dx=-\bQ(t). 
\end{cases}
\end{align}
%		{\color{red} there seem to $E$ and $\Ee$ }
%		{\color{blue}where the aggregate energy capacity is $E=N E_0$, where $N$ is the number of batteries connected to the grid and $E_0$ (in KW) is the capacity of each battery - omit??}.
The stored energy by each agent follows optimal trajectories that solve the Euler Lagrange equation: 
\[
c \ddot{\bx}+\bdp=0. 
\]
Integrating the previous equation in time, we get 
\begin{equation} \label{opttraj}
\bdx(t)=\frac{1}{c} \lb \theta-\bp(t) \rb, 
\end{equation}
where $\theta$ is time independent. 
%	Furthermore we assume that $x\in[0,1]$, the fraction of battery charge hence $\dot x$ has dimensions $A h$. 
Next, by differentiating the Hamilton-Jacobi equation, we get
\[
-(u_x)_t+(u_x+\bp) \frac{u_{xx}}{c}=0. 
\]
Using the previous equation, taking into account the transport equation, and integrating by parts, we have
\begin{equation*}
\begin{split}
&	\frac{d}{dt} \int_{\Rr} u_x m dx = \int_{\Rr} u_{xt} m + u_x m_t = \int_{\Rr}  u_{xt} m + \frac 1 c u_x \lb m(\varpi+u_x) \rb_x \\
&= \frac 1 c\int_{\Rr} (\varpi+u_x) u_{xx} m - u_{xx} m (\varpi+u_x) dx= 0,
\end{split}
\end{equation*}
assuming that $m$ has fast enough decay at infinity. 

% Taking into account the transport equation, we conclude that
% \[
% 	\frac{d}{dt}\int_{\Rr} u_x m=0.
% \]
Thus, the supply vs demand balance condition becomes
\[
\bQ(t)=-\frac{1}{c}\int_{\Rr} (u_x+\bp) m dx = \frac{1}{c} \lb \Theta -\bp \rb, 
\]
where
\begin{equation}
\label{thetadef}
\Theta=-\int_{\Rr} u_x m dx
\end{equation}
is constant.
From the above, we obtain the following linear price-supply relation
\begin{equation}
\label{lpsr}
\bp =\Theta- c\bQ(t).
\end{equation}
% In addition, because $m$ is a probability density, we have
% \begin{equation}
% \label{thetaformula}
% 	\Theta=-\int u_x m.
%\end{equation}
Integrating \eqref{opttraj} in time and taking into account the linear price-supply relation \eqref{lpsr}, we gather
\begin{equation}
\bx(T) = \bx(t)+ \frac{1}{c} \int_t^T (\theta - \bp(s) )ds= x + \frac{T-t}{c} \lb \theta-\Theta \rb + \int_t^T  \bQ(s) ds.
\end{equation}
Accordingly, $u$ is given by the optimization problem
\begin{align*}
u(x,t)
=&\inf_{\theta} 
\int_t^T \left[\frac{(\theta-\Theta +c \bQ(s))^2}{2 c}
+\frac 1 c (\theta-\Theta +c \bQ(s)) (\Theta -c \bQ(s))\right] ds\\&
+\bar u\left(x+\frac{(\theta -\Theta)}{c} (T-t)+K(t)\right),
\end{align*}
where
\[
K(t)=\int_t^T Q(s)ds.
\]
By setting $\mu =\theta-\Theta$, we get 
\begin{align*}
u(x,t)
=&\inf_{\mu} 
\int_t^T \left[\frac{(\mu +c \bQ(s))^2}{2c}
+\frac 1 c (\mu +c\bQ(s)) (\Theta -c\bQ(s))\right] ds\\&
+\bar u\left(x+\frac \mu c (T-t)+K(t)\right).
\end{align*}
Thus, given $\Theta$, we determine a function, $u^\Theta$, solving the preceding minimization problem. For that, we expand the integral
 to get
\begin{equation*}
\begin{split}
u^\Theta(x,t) = \inf_{\mu}\Bigg[ \frac{T-t}{2 c} \mu^2 + \frac 1 c (T-t) \Theta \mu + \int_t^T \lb \Theta -c\frac{\bQ(s)}{2} \rb \bQ(s) ds \\
+ \bar u \lb x+\frac \mu c (T-t)+K(t) \rb\Bigg].
\end{split}
\end{equation*}
Next, we take the derivative of the right-hand side of the prior identity with respect to $\mu$ and obtain the relation
\begin{equation}
\label{impmu}
\mu +  \bar u_x\lb \bx(T) \rb = -\Theta. 
\end{equation}
%\js{Deveria ser:
%\begin{equation}
%	\frac{T-t}{c}\mu + \frac{T-t}{c}\Theta + \frac{d}{d \mu}\bar u = \frac{T-t}{c}\mu + \frac{T-t}{c}\Theta + \bar u' \frac{T-t}{c} = 0 \implies \mu +  \bar u_x\lb \bx(T) \rb = -\Theta.
%\end{equation}
%}
If $\bar u$ is a convex function, the preceding equation has a unique solution, $\mu(\Theta)$ for each given $\Theta$. 
Thus, given $\Theta$, we obtain a solution, $u^\Theta$ for the Hamilton-Jacobi equation. 
Finally, we use the resulting expression for $u^\Theta$ in \eqref{thetadef} at $t=0$ to obtain the following condition for  $\Theta$:
\begin{equation}
\label{thetaimp}
\Theta=-\int_{\Rr} u_x^\Theta(x,0) m_0(x) dx. 	   
\end{equation}
Solving the preceding equation, we obtain $\Theta$ and hence $\varpi$ using the price-supply relation, \eqref{lpsr}. 

As an example, we consider the terminal cost
\begin{equation*}
\bar u(y) = \frac{\gamma }{2}\lb y-\zeta \rb^2. 
\end{equation*}
%where we want to avoid having batteries fully charged, $x(T)=1$ or discharged, $x(T)=0$, at the final time $T$.

Solving\eqref{impmu}, we obtain
\begin{equation} \label{eq:mu}
\mu=-\frac{\gamma(K(t)+x-\zeta) + \Theta}{1+\gamma \frac{T-t}{c}}.
%\mu = \frac{c-2c\lb c\Theta + x + K(t) \rb}{2(c^2+T-t)}. 
\end{equation}
Accordingly, we have
\begin{align*}
&u^\Theta(x,t)=\\&
\frac{\gamma (K(t)+x-\zeta)^2+ \frac{(t-T)}{c} \Theta (	
2\gamma (K(t)+x-\zeta) +\Theta)}{2 \left(1+\gamma \frac{T-t}{c}\right)}\\&+
\Theta K(t)
- c\int_t^T \frac{\bQ^2(s)}{2}ds.
\end{align*}
Therefore, 
\[
u_x(x,t)=
\gamma\frac{ K(t)+x-\zeta-\frac{(T-t)}{c} \Theta}{1+\gamma \frac{T-t}{c}}
\]
Using the previous expression for $t=0$ in 
 \eqref{thetaimp}, we obtain the following equation for $\Theta$
\[
\Theta=-
\gamma\frac{ K(0)+\bar x-\zeta-\frac{T}{c} \Theta}{1+\gamma \frac{T}{c}}
\]
where
\[
\bar x=\int_{\Rr} x m_0 dx. 
\]
Thus, 
\begin{equation}
\label{thetafor}
\Theta=-\gamma (K(0)+\bar x-\zeta).
\end{equation}
Therefore, using \eqref{lpsr}, we obtain 
\[
\varpi=-\gamma (K(0)+\bar x-\zeta)-c\bQ.
\]
Finally, we use the above results and conclude that each agent 
dynamics is
\[
\begin{cases}
\bdx=\frac{(\bar x-x)\gamma}{1+\frac{T}{c}\gamma}+\bQ\\
\bx(0)=x.  
\end{cases}
\]
In alternative, using 
\[
\bdx(t)=-\frac{\varpi+u_x(\bx(t), t)}{c}
\]
we have
\begin{equation}
\label{dyn}
\begin{cases}
\bdx(t)=\frac{(\bar \bx(t)-\bx(t))\gamma}{1+\frac{T-t}{c}\gamma}+\bQ\\
\bx(0)=x,   
\end{cases}
\end{equation}
where
\[
\bar \bx(t)=\int_{\Rr} x m(x,t)dx. 
\]
Averaging \eqref{dyn} with respect to $m$, we obtain
\begin{equation}
\label{energyid}
{\dot {\bar{\bf x}}}(t)=\bQ(t), 
\end{equation}
which is simply the conservation of energy. Thus, the trajectory of
an individual agent can be computed by combining \eqref{dyn} with \eqref{energyid} into the system
\[
\begin{cases}
\bdx(t)=\frac{(\bar \bx(t)-\bx(t))\gamma}{1+\frac{T-t}{c}\gamma}+\bQ(t)\\
{\dot {\bar{\bf x}}}(t)=\bQ. 
\end{cases}
\]
The previous system is a closed system of ordinary differential equations
that only involves $\bQ$ and the parameters of the problem. Surprisingly, it also does not depend on $\zeta$. This is due
to the fact that the average of the position of the agents is
determined by $\bQ$. Hence, the only way agents can improve their value function is by getting close to each other. This is seen in 
the mean-reverting structure in \eqref{dyn}.

\subsection{Quadratic cost with potential} % (fold)
\label{sub:quadratic_cost_with_potential}
Now, we consider a running cost with a quadratic potential. This potential penalizes the agents when the charge or stored energy deviates too much from a set point, $\kappa$. 
This penalty has the form of $\frac \eta 2(x-\kappa)^2$, where $\eta$ measures the strength of the penalty. 
Thus, we have
\[
\ell(t,x,\alpha)=c\frac{\alpha^2}{2} + \alpha\bp(t) + \frac \eta 2(x-\kappa)^2.
\]
%where $\beta(t)$ is a given continuous function, and the corresponding MFG
%where $\beta$ is a given constant, and 
The corresponding MFG is
\begin{align} \label{MFG_const_pot}
\begin{cases}
-u_t+\frac{(\bp(t)+u_x)^2}{2c}-\frac \eta 2(x-\kappa)^2=0\\
m_t-\frac 1 c (m(\bp(t)+u_x) )_x=0\\
\frac 1 c \int (\bp(t)+u_x) m =-\bQ(t). 
\end{cases}
\end{align}
%{\bf The key observation - averaged dynamics}
Differentiating the Hamilton-Jacobi equation, we conclude that 
\[
-(u_x)_t+(u_x+\bp) u_{xx}-\eta (x-\kappa)=0. 
\]
We define the following quantities
\[
\Pi=\int_{\Rr} u_x m \text{ and } \Xi=\int_{\Rr} x m.
\]
Taking the time derivative on the first quantity and using the transport equation, we get
\begin{equation*}
\begin{split}
\dot \Pi &= \int_{\Rr} u_{xt}m + u_x m_t \\
&= \int_{\Rr} (\varpi+u_x)u_{xx}m - \eta \int_{\Rr} (x-\kappa) m + \int_{\Rr} u_x (m(\varpi+u_x))_x\\
&= \int_{\Rr} (\varpi+u_x)u_{xx}m - \int_{\Rr} u_{xx} m (\varpi+u_x) - \eta \int_{\Rr} (x-\kappa) m. 
\end{split}
\end{equation*}
Simplifying the preceding expression, we obtain
\begin{equation*}
\dot \Pi = -\eta (\Xi-\kappa).
\end{equation*}
%provided $m$ has fast enough decay at the boundary of $\Omega$.

Next, we take the transport equation, multiply it by $x$,  and integrate by parts,
to get
\begin{equation*}
\begin{split}
\dot \Xi&= \frac{d}{dt}\int_{\Rr} xm = \int_{\Rr} x m_t = \int_{\Rr} x \lb m(\varpi+u_x) \rb_x\\
&= -\int_{\Rr} m(\varpi+u_x) + \left[ x(m(\varpi+u_x)\right|_{\Omega}\\
&= -\varpi- \int_{\Rr} u_x m.
\end{split}
\end{equation*}
Thus, we conclude that 
\[
\dot \Xi=-\bp-\Pi.
\]
Therefore, we obtain the following averaged dynamics
\begin{equation*}
\begin{cases}
\dot \Xi = - \bp - \Pi\\
\dot \Pi = -\eta (\Xi-\kappa).
\end{cases}
\end{equation*}

Taking the time derivative of the second equation and using the first equation, we get
\begin{equation*}
\ddot \Pi - \eta \Pi = \eta(\varpi+\kappa). 
\end{equation*}
The preceding equation has the following solution
\begin{equation*}
\begin{split}
\Pi = -\kappa+e^{\sqrt{\eta}t} C_1 + e^{-\sqrt{\eta}t} C_2 
+ \frac{\sqrt{\eta}}{2}\int_0^t \lb e^{\sqrt{\eta}(t-s)}- e^{-\sqrt{\eta}(t-s)}\rb \bp(s) ds.
\end{split}
\end{equation*}
Moreover, at $t=0$, we have
\[
\dot \Pi(0)=-\eta (\Xi(0)-\kappa)=-\eta(\bar x-\kappa), 
\]
where
\[
\bar x=\int_{\Rr} x \bar m. 
\]
Thus, we need an additional constant to determine $\Pi(0)$. Given this constant, 
from the constraint equation in \eqref{MFG_const_pot}, we get
\begin{equation*}
\begin{split}
\bp_{\Pi(0)}(t) &= - \Pi - \bQ(t)\\
&=  f_{\Pi(0)}(t)	- \frac{\sqrt{\eta}}{2} \int_0^t \lb e^{\sqrt{\eta}(t-s)}- e^{-\sqrt{\eta}(t-s)}\rb \bp_{\Pi(0)}(s) ds,
\end{split}
\end{equation*}
where $f_{\Pi(0)}(t)= \kappa-e^{\sqrt{\eta}t} C_1 - e^{-\sqrt{\eta}t} C_2 -\bQ(t)$, and $C_1$ and $C_2$ are determined by the value of $\dot \Pi(0)$ and by the unknown value $\Pi(0)$.  

The preceding equation is a Volterra integral equation of the second kind with a separable kernel. In principle, 
we can solve this equation using Laplace's transform. 
The previous equation is of the form
\begin{equation} \label{eq:conv_kp}
\bp_{\Pi(0)}(t) = f_{\Pi(0)}(t) - \lambda (k*\bp_{\Pi(0)})(t),
\end{equation}
where
\[
k(t)=- \frac{\sqrt{\eta}}{2} \lb e^{\sqrt{\eta}t}- e^{-\sqrt{\eta}t}\rb
\]
and
$(k*\bp)=\int_0^t k(t-s)\bp(s) ds$ denotes the convolution product of the kernel $k$ with $\bp$.

Let $\mathcal{L}$ denote the Laplace transform. Because $\mathcal{L}\{(k*\bp)(t)\}=\mathcal{L}\{k(t)\}\mathcal{L}\{\bp(t)\}$, applying the Laplace transform to \eqref{eq:conv_kp} yields
\begin{equation*}
\mathcal{L}\{\bp(t)\} = \mathcal{L}\{f_{\Pi(0)}(t)\} + \lambda \mathcal{L}\{k(t)\}\mathcal{L}\{\bp(t)\}.
\end{equation*}
Simplifying the above equation, we obtain
\begin{equation*}
\bp_{\Pi(0)}(t)= \mathcal{L}^{-1}\left\{\frac{\mathcal{L}\{f_{\Pi(0)}(t)\}}{1-\lambda \mathcal{L}\{k(t)\}}\right\},
\end{equation*}
where $\mathcal{L}^{-1}$ is the inverse Laplace transform.

%Then, we have
%\begin{equation*}
%\bp(t)= \mathcal{L}^{-1}\left\{ \frac{(C_2-C_1)\sqrt{\beta} - (C_1+C_2)\zeta + (-\beta+\zeta^2) \mathcal{L}\{Q(t\}}{\sqrt{\beta}e^{\sqrt{\beta}}+\beta-\zeta^2} \right\}. 
%\end{equation*}

Finally, we take the resulting expression for $\varpi_{\Pi(0)}$ into 
the Hamilton-Jacobi equation, solve it and obtain a function
$u_{\Pi(0)}(x,t)$. Then, the value $\Pi_0$ is determined implicitly
by the equation
\begin{equation}
\label{imppi}
{\Pi(0)}=\int_{\Rr} (u_{\Pi(0)})_x(x,0)m_0 dx. 
\end{equation}

In the case of quadratic terminal data, 
\[
u(x,T)= \frac{\gamma}{2}(x-\zeta)^2, 
\]
we can reduce the solution of the Hamilton-Jacobi equation
into solving ordinary differential equations. 
For that, we look for a solution
\[
u(x,t)=\theta_0(t)+\theta_1(t) x+ \theta_2(t) x^2 
\]
satisfying 
\[
u(x,T)= \frac{\gamma}{2}(x-\zeta)^2=\theta_0(T)+\theta_1(T) x+ \theta_2(T) x^2. 
\]
Then, the first equation in \eqref{MFG_const_pot} becomes
\[
-(\dot \theta_0+\dot \theta_1(t) x+ \dot \theta_2(t) x^2)+\frac{(\bp(t)+\theta_1(t) + 2\theta_2(t) x)^2}{2c}-\frac \eta 2(x-\kappa)^2=0. 
\]
Thus, by matching powers of $x$, we obtain differential equations for
$\theta_i$, $0\leq i\leq 2$. The resulting expression can be used in 
\eqref{imppi} to obtain the solution. 

%
%
%which has a closed form if we approximate the supply function $\bQ(t)$, for instances, by a polynomial.
%
%{\color{red} still need to figure out how to solve for $\Pi(0)$}
%

% subsection quadratic_cost_with_potential (end)
% \subsection{Peak consumption utility} % (fold)
% \label{sub:peack_consumption_utility}
% 	Now, we consider quadratic potentials with peak utility
% 	\[
% 		\ell(x, t,\alpha, A)= c \frac{\alpha^2}{2} + \alpha\bp(t) - \beta E^{Tot} \int_0^1 \alpha(t,x) m(t,x)dx,
% 	\]
% 	dynamics
% 	\[
% 		\dot x = \frac{1}{c} \left( \theta(x)-p(t) \right)
% 	\]
% 	\begin{align} \label{MFG_const_pot}
% 		\begin{cases}
% 			-u_t+\frac{(\bp(t)+u_x)^2}{2c}-\beta E^{Tot}\left(\frac{\beta E^{Tot}}{2c} - \int_0^1 \alpha m dx\right)=0\\
% 			m_t-( \frac{\bp(t)+u_x}{c}m )_x=0\\
% 			\int (\bp(t)+u_x) m =-\bQ(t).
% 		\end{cases}
% 	\end{align}
% % subsection peack_consumption_utility (end)
% section linear_quadratic_models (end)
\section{Real Data} % (fold)
\label{sec:real_data}
	In this section, we use real data of daily energy consumption in the UK, during a twenty-four hour period.
	The data is available online at
%	{https://www.nationalgrid.com/uk/electricity/market-operations-and-data}
 \url{https://www.nationalgrid.com/uk/}. In Figure \ref{fig:Q}, we plot the    power 
	supply oscillation $\bQ$ (which is simply the negative of the demand) normalized to have mean zero over 24 hours.
	\begin{figure}[h]
		\centering
		\includegraphics[width=.7\textwidth]{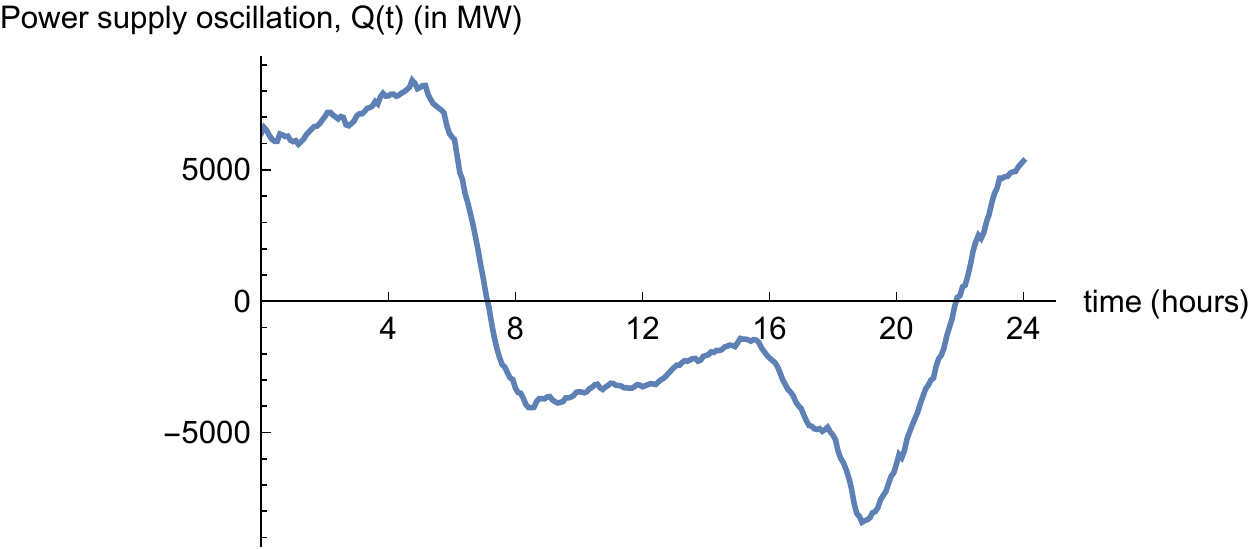}
		\caption{Normalized electricity production $\bQ$.}
		\label{fig:Q}
	\end{figure}

	We compare our price-formation model with the MFG model presented in \cite{de2016distributed}. 
	In that model, the  energy price is a function of the aggregate consumption. In that case, 
	the price is not determined by a supply vs demand condition and, thus, there may 
	be an energy imbalance. 
	Here, we consider the state-independent quadratic cost model from Section \ref{sub:state_independent_quadratic_cost}.  In our model, the price depends only on the constant that accounts for battery's wear and tear.  
	This constant can be empirically estimated, but, here, we calibrate our model against the model in  \cite{de2016distributed} using a least squares approach. 
	Let $\vartheta$ be the priced computed in \cite{de2016distributed}. According to \eqref{lpsr}, the price given is 
	$\varpi^{c, \Theta} =\Theta-c \bQ$. Thus, 
	we estimate the value of $c$, by	
	solving the minimization problem
	\begin{equation}
		\min_{c, \Theta\in \Rr} \| \varpi^{c, \Theta} - \vartheta \|_2^2 = \min_{c, \Theta\in \Rr} \| \Theta - c\bQ - \vartheta \|_2^2,  
	\end{equation}
%	Where we used the same parameters, namely, the number of batteries connected to the grid,  $N=10^6$, with {\color{red}average storage capacity **}.
    and, using $N=10^6$ agents, we obtained $c=0.00172  \$ (\text{kW})^{-2} h^{-1}$.
	
	The price given by our model is plotted in Figure \ref{fig:price}. We predict smaller
	peak oscillations and thus, our methods may help stabilize the market. 
	\begin{figure}[h]
	  \centering
	    \includegraphics[width=.7\textwidth]{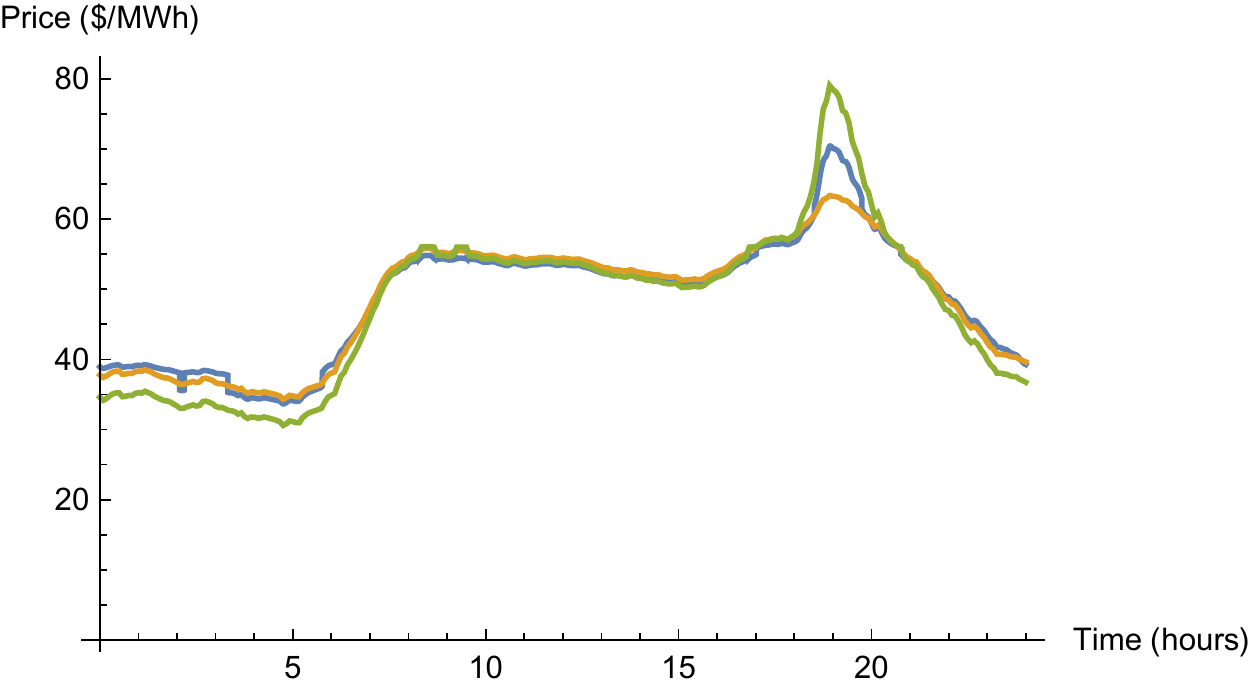}
	  \caption{Evolution of electricity price during a twenty-four hour period. In green, we plot the energy's price when no batteries are connected to the grid. In blue, we plot the price with batteries connected to the grid and the price is given by the model in \cite{de2016distributed}. In yellow, we plot the price corresponding to our model.}
	  \label{fig:price}
	\end{figure}
	
% section real_data (end)

\section{Conclusions and extensions}

Here, we described a model for price formation in electricity markets, proved 
the well-posedness of the problem, and developed methods to compute the solutions. Our model has a minimal number of features and fits well real data. 
In addition, our model may have stabilizing properties of the price 
at peak consumption. 

Several
extensions of our model are of interest. First, we can consider the case
where the supply $\bQ(\varpi, t)$ depends on price. Provided the supply increases with the price, which is a natural assumption from the economic point of view, the solvability conditions are similar. 
In particular, \eqref{pode} becomes
\[
\dot \varpi \left[\frac{\partial \bQ}{\partial \varpi}+\int_{\Rr} D^2_{pp} H m dx\right]
=-\frac{\partial \bQ}{\partial t}-\int_{\Rr} \left[D^2_{pp} H D_xH m+\epsilon D^{3}_{ppp}H u_{xx}^2 m\right]dx. 
\]
Thus, we obtain similar bounds for $\dot \varpi$ if $\frac{\partial \bQ}{\partial \varpi}\geq 0$.
Therefore, the existence theory follows a similar argument. 
Moreover,  if $\frac{\partial \bQ}{\partial \varpi}\geq 0$,   
the operator $A$ in section \eqref{suniq} is monotone and, therefore,  uniqueness
of solution holds. 

In real applications, $\bQ$ may depend on delayed prices. While this does not fit directly into our framework, we can consider a Taylor expansion:
\begin{align*}
\bQ(\varpi(t-\tau), t)\simeq &
\bQ(\varpi(t), t)-\tau \frac{\partial\bQ(\varpi(t), t)}{\partial\varpi}\dot \varpi(t)\\&
+\frac{\tau^2}{2}\left[
\frac{\partial\bQ(\varpi(t), t)}{\partial\varpi}\ddot \varpi + 
\frac{\partial^2 \bQ(\varpi(t), t)}{\partial \varpi^2}\dot \varpi^2(t)
\right]+\hdots
\end{align*}
Thus, it is natural to look at the case where $\bQ$ depends on the 
price and its derivatives. 

Finally, a natural extension is the case where $\bQ$ has random fluctuations. This is particularly relevant if the energy production is subject to unpredictable changes - this is the case of wind energy. For the case where $\bQ$ is random, we need to use the master equation as in \cite{LCDF}.

%4) Random $Q$. 
%
%
%	Alternatively, we could also consider $T$-periodic conditions:
%\begin{equation}
%\label{pcond}
%\begin{cases}
%u(x,t)=u(x,t+T),\\
%m(x,t)=m(x,t+T),
%\end{cases}
%\end{equation}
%where $T>0$ is the period.
%In this case, we require the demand function $\bQ$ to be periodic, $m$ to be a probability density, and modify \eqref{mainsys} by adding a constant, $\Hh$, to the Hamilton-Jacobi equation
%\begin{equation}
%\label{mainsyspc}
%\begin{cases}
%-u_t + H(x,\bp(t)+Du)=\epsilon \Delta u+\Hh\\
%m_t - \div(D_p H(x,\bp(t)+Du)m)=\epsilon \Delta m\\
%\int_{\Omega} D_p H(x,\bp(t)+Du) d m = -Q(t). 
%\end{cases}
%\end{equation}	
%The constant $\Hh$ determines the long-term cost of a typical agent. 
%The periodic case arises in the study of daily, weakly, or seasonal consumption.
%

\bibliographystyle{plain}

\begin{thebibliography}{10}
	
	\bibitem{Bardi}
	M.~Bardi and I.~Capuzzo-Dolcetta.
	\newblock {\em Optimal control and viscosity solutions of
		{H}amilton-{J}acobi-{B}ellman equations}.
	\newblock Birkh\"auser Boston Inc., Boston, MA, 1997.
	\newblock With appendices by Maurizio Falcone and Pierpaolo Soravia.
	
	\bibitem{MR3210751}
	M.~Burger, L.~A. Caffarelli, P.~A. Markowich, and M.-T. Wolfram.
	\newblock On the asymptotic behavior of a {B}oltzmann-type price formation
	model.
	\newblock {\em Commun. Math. Sci.}, 12(7):1353--1361, 2014.
	
	\bibitem{MR3078202}
	M.~Burger, L.~A. Caffarelli, P.~A. Markowich, and Marie-Therese Wolfram.
	\newblock On a {B}oltzmann-type price formation model.
	\newblock {\em Proc. R. Soc. Lond. Ser. A Math. Phys. Eng. Sci.},
	469(2157):20130126, 20, 2013.
	
	\bibitem{MR2817378}
	L.~A. Caffarelli, P.~A. Markowich, and J.-F. Pietschmann.
	\newblock On a price formation free boundary model by {L}asry and {L}ions.
	\newblock {\em C. R. Math. Acad. Sci. Paris}, 349(11-12):621--624, 2011.
	
	\bibitem{MR2835888}
	L.~A. Caffarelli, P.~A. Markowich, and M.-T. Wolfram.
	\newblock On a price formation free boundary model by {L}asry and {L}ions: the
	{N}eumann problem.
	\newblock {\em C. R. Math. Acad. Sci. Paris}, 349(15-16):841--844, 2011.
	
	\bibitem{cardaliaguet}
	P.~Cardaliaguet.
	\newblock Notes on mean-field games.
	\newblock 2011.
	
	\bibitem{2017arXiv171008991C}
	A.~{Clemence}, B.~{Tahar Imen}, and M.~{Anis}.
	\newblock {An Extended Mean Field Game for Storage in Smart Grids}.
	\newblock {\em ArXiv e-prints}, October 2017.
	
	\bibitem{2011arXiv1110.1732C}
	R.~{Couillet}, S.~{Medina Perlaza}, H.~{Tembine}, and M.~{Debbah}.
	\newblock {Electrical Vehicles in the Smart Grid: A Mean Field Game Analysis}.
	\newblock {\em ArXiv e-prints}, October 2011.
	
	\bibitem{de2016distributed}
	A.~De~Paola, D.~Angeli, and G.~Strbac.
	\newblock Distributed control of micro-storage devices with mean field games.
	\newblock {\em IEEE Transactions on Smart Grid}, 7(2):1119--1127, 2016.
	
	\bibitem{E3}
	L.~C. Evans.
	\newblock Adjoint and compensated compactness methods for {H}amilton-{J}acobi
	{PDE}.
	\newblock {\em Arch. Ration. Mech. Anal.}, 197(3):1053--1088, 2010.
	
	\bibitem{Gomes20164693}
	D.~Gomes, L.~Lafleche, and L.~Nurbekyan.
	\newblock A mean-field game economic growth model.
	\newblock {\em Proceedings of the American Control Conference},
	2016-July:4693--4698, 2016.
	
	\bibitem{GNP}
	D.~Gomes, L.~Nurbekyan, and E.~Pimentel.
	\newblock {\em Economic models and mean-field games theory}.
	\newblock Publica\c c\~oes Matem\'aticas do IMPA. [IMPA Mathematical
	Publications]. Instituto Nacional de Matem\'atica Pura e Aplicada (IMPA), Rio
	de Janeiro, 2015.
	\newblock 30${^{{}}{\rm{o}}}$ Col{\'o}quio Brasileiro de Matem{\'a}tica. [30th
	Brazilian Mathematics Colloquium].
	
	\bibitem{GPV}
	D.~Gomes, E.~Pimentel, and V.~Voskanyan.
	\newblock {\em Regularity theory for mean-field game systems}.
	\newblock SpringerBriefs in Mathematics. Springer, [Cham], 2016.
	
	\bibitem{GS}
	D.~Gomes and J.~Sa{\'u}de.
	\newblock Mean field games models---a brief survey.
	\newblock {\em Dyn. Games Appl.}, 4(2):110--154, 2014.
	
	\bibitem{llg3}
	Olivier Guéant, Jean~Michel Lasry, and Pierre~Louis Lions.
	\newblock {Mean Field Games and Oil Production}.
	\newblock 2010.
	
	\bibitem{Caines2}
	M.~Huang, P.~E. Caines, and R.~P. Malham{\'e}.
	\newblock Large-population cost-coupled {LQG} problems with nonuniform agents:
	individual-mass behavior and decentralized {$\epsilon$}-{N}ash equilibria.
	\newblock {\em IEEE Trans. Automat. Control}, 52(9):1560--1571, 2007.
	
	\bibitem{Caines1}
	M.~Huang, R.~P. Malham{\'e}, and P.~E. Caines.
	\newblock Large population stochastic dynamic games: closed-loop
	{M}c{K}ean-{V}lasov systems and the {N}ash certainty equivalence principle.
	\newblock {\em Commun. Inf. Syst.}, 6(3):221--251, 2006.
	
	\bibitem{Kizilkale20143493}
	A.C. Kizilkale and R.P. Malhame.
	\newblock A class of collective target tracking problems in energy systems:
	Cooperative versus non-cooperative mean field control solutions.
	\newblock {\em Proceedings of the IEEE Conference on Decision and Control},
	2015-February(February):3493--3498, 2014.
	
	\bibitem{Kizilkale2014829}
	A.C. Kizilkale and R.P. Malhame.
	\newblock Collective target tracking mean field control for electric space
	heaters.
	\newblock {\em 2014 22nd Mediterranean Conference on Control and Automation,
		MED 2014}, pages 829--834, 2014.
	
	\bibitem{Kizilkale20141867}
	A.C. Kizilkale and R.P. Malhame.
	\newblock Collective target tracking mean field control for markovian
	jump-driven models of electric water heating loads.
	\newblock {\em IFAC Proceedings Volumes (IFAC-PapersOnline)}, 19:1867--1872,
	2014.
	
	\bibitem{LachapelleLasryLehalleLions}
	Aim\'e Lachapelle, Jean-Michel Lasry, Charles-Albert Lehalle, and Pierre-Louis
	Lions.
	\newblock Efficiency of the price formation process in presence of high
	frequency participants: a mean field game analysis.
	\newblock {\em Math. Financ. Econ.}, 10(3):223--262, 2016.
	
	\bibitem{ll1}
	J.-M. Lasry and P.-L. Lions.
	\newblock Jeux \`a champ moyen. {I}. {L}e cas stationnaire.
	\newblock {\em C. R. Math. Acad. Sci. Paris}, 343(9):619--625, 2006.
	
	\bibitem{ll2}
	J.-M. Lasry and P.-L. Lions.
	\newblock Jeux \`a champ moyen. {II}. {H}orizon fini et contr\^ole optimal.
	\newblock {\em C. R. Math. Acad. Sci. Paris}, 343(10):679--684, 2006.
	
	\bibitem{ll3}
	J.-M. Lasry and P.-L. Lions.
	\newblock Mean field games.
	\newblock {\em Jpn. J. Math.}, 2(1):229--260, 2007.
	
	\bibitem{LCDF}
	P.~L. Lions.
	\newblock Coll\`ege de {F}rance course on mean-field games.
	\newblock 2007-2011.
	
	\bibitem{MR933047}
	Roland Malham\'e and Chee-Yee Chong.
	\newblock On the statistical properties of a cyclic diffusion process arising
	in the modeling of thermostat-controlled electric power system loads.
	\newblock {\em SIAM J. Appl. Math.}, 48(2):465--480, 1988.
	
	\bibitem{MR1029068}
	Roland Malham\'e, Sofi\`ene Kamoun, and Denis Dochain.
	\newblock On-line identification of electric load models for load management.
	\newblock In {\em Advances in computing and control ({B}aton {R}ouge, {LA},
		1988)}, volume 130 of {\em Lect. Notes Control Inf. Sci.}, pages 290--304.
	Springer, Berlin, 1989.
	
	\bibitem{MR2573148}
	P.~A. Markowich, N.~Matevosyan, J.-F. Pietschmann, and M.-T. Wolfram.
	\newblock On a parabolic free boundary equation modeling price formation.
	\newblock {\em Math. Models Methods Appl. Sci.}, 19(10):1929--1957, 2009.
	
\end{thebibliography}
\def\cprime{$'$}

\def\cprime{$'$}

\end{document}